\newtheorem{theorem}{Theorem}[section]
\newtheorem{corollary}[theorem]{Corollary}
\newtheorem{lemma}[theorem]{Lemma}
\theoremstyle{definition}
\theoremstyle{remark}
\newtheorem{remark}[theorem]{Remark}
\numberwithin{equation}{section}
\newcommand{\ds}{\displaystyle}
\newcommand{\R}{\mathbb R}
\newcommand{\Com}{\mathbb C}
\newcommand{\N}{\mathbb N}
\newcommand{\Z}{\mathbb Z}
\newcommand{\abs}[1]{\left\vert#1\right\vert}
\newcommand{\set}[2]{\left\{ #1 \,;\, #2 \right\}}
\newcommand{\eps}{\varepsilon}
\begin{document}

\title{On  Bochner's almost-periodicity criterion 
}

\author{Philippe CIEUTAT}
%
%\maketitle
%
\setcounter{footnote}{-1}
\renewcommand{\thefootnote}{\alph{footnote}}
\footnote{  Universit\'e Paris-Saclay, UVSQ, CNRS, Laboratoire de math\'ematiques de Versailles, 78000, Versailles, France.  E-mail address: philippe.cieutat@uvsq.fr}

%%%%%%%%%%%%%%%%%%%%%%%%%%%
\begin{abstract} 
We  give an extension of  Bochner's criterion for the almost periodic functions.
 By using our main result, we  extend two results of A. Haraux. The first is a generalization  of Bochner's criterion which  is useful for periodic dynamical systems. 
The second is
a characterization of periodic functions in term of Bochner's criterion. 
 \end{abstract}
%%%%%%%%%%%%%%%%%%%%%%%%%%%

%
\maketitle

\noindent
{\bf 2020 Mathematic Subject Classification:}  35B10, 35B40, 42A75, 47H20. 
\vskip2mm
\noindent
{\bf Keywords:} 
Bochner almost periodicity, periodic function, almost periodic function, asymptotically almost periodic function, nonlinear semigroup, periodic dynamical system. 

\vskip10mm
%

%\tableofcontents

%%%%%%%%%%%%%%%%%%%%%%%%%%%
\section{Introduction} 
%Section 1
\label{i}
%%%%%%%%%%%%%%%%%%%%%%%%%%%

The almost periodic functions in the sense of Bohr have been characterized by Bochner by means of a compactness criterion  in the space of the bounded and continuous functions \cite{Bo1, Bo2}.
The Bochner's criterion plays an essential role in the theory and in applications. 
We give a new almost-periodicity criterion for functions with values in a given complete metric space
which is  useful to study  the almost periodicity of solutions of dynamical systems governed by a family of operators with a positive parameter. 
This criterion  is an extension of Bochner's criterion. 
Then Haraux gave a generalization  of Bochner's criterion \cite[Theorem 1]{Ha2}, called {\it a simple almost-periodicity criterion}  which is useful for periodic dynamical systems. 
From our  result, we deduce an extension of this criterion.
We also obtain  an extension of an other result of Haraux 
 which characterizes the periodic functions in terms of the Bochner's criterion \cite{Ha4}.
 In the same spirit, we treat the asymptotically almost periodic case.
\vskip 2 mm
We give a description of this article, the precise definitions will be given in Section \ref{iib}.
Throughout this section $(X,d)$ is a complete metric space.
 An almost periodic function $u:\R\to X$ in the sense of Bohr is characterized by the Bochner's criterion  which is the following: {\it $u$ is bounded and continuous, and  from any real sequence of real numbers $(\tau_n)_n$, there exists a subsequence $(\tau_{\phi(n)})_n$ such that the sequence of functions $(u(t+\tau_{\phi(n)}))_n$ is uniformly convergent on $\R$.}
In Section \ref{ii}, we give two extensions of Bochner's criterion. 
First $u:\R\to X$ is an almost periodic if and if only if in the Bochner's criterion, we impose that the terms of the sequence of real numbers $(\tau_n)_n$ are all {\it positive}.
Second  $u:\R\to X$ is an almost periodic if and if only if in the Bochner's criterion, the convergence of the subsequence of functions $(u(t+\tau_{\phi(n)}))_n$ is uniform only on  {\it $[0,+\infty)$}. 
These improvements are useful to study the almost periodicity of solutions of an evolution equation governed by a family of operators with a positive parameter, in particular for a
$C_0$-semigroup of linear operators or more generally, for an autonomous dynamical system (nonlinear semigroup). 
From our extension of Bochner's criterion,  we give new proofs which are direct and simpler on known results on the almost periodicity of solutions of autonomous dynamic systems.
\vskip 2 mm
Haraux gave a generalization  of Bochner's criterion the called {\it a simple almost-periodicity criterion} \cite[Theorem 1]{Ha2}. This criterion makes it possible to choose in the   Bochner's criterion, the sequence of real numbers $(\tau_n)_n$ in a set of the type $\omega\Z$ which is very useful for periodic dynamical systems. 
From our extension  of Bochner's criterion, in Section \ref{iii}, we deduce an improvement of this result.
An asymptotically almost periodic function $u:\R^+\to X$ is a perturbation of almost periodic.
A such function  is characterized by a property of the type of the Bochner's criterion. 
In the same spirit, we extend this characterization of  asymptotically almost periodic functions. 
Then we apply these results to study the almost periodicity of solutions of periodic dynamical systems. 
\vskip 2 mm
Bochner's criterion can also be expressed in terms of the relative compactness of the set $\{u(\cdot+\tau) ; \tau\in\R \}$   in a suitable set of continuous functions.
A periodic function is a special  case of almost periodic function.
A direct consequence of \cite[Proposition 2]{Ha4} given by Haraux  characterizes a periodic function in terms of the Bochner's criterion. This characterization   is the following:
$u:\R\to X$ is continuous is periodic if and if only if the set $\{u(\cdot+\tau) ; \tau\in\R \}$   is compact. 
In Section \ref{iv}, By using our improvement of Bochner's criterion, we give an extension  of  the Haraux's characterization of periodic functions. 
We will also give a result on asymptotically periodic functions of the type of Haraux result described above. Then we apply these results to study the periodicity of solutions of autonomous dynamical systems.

%%%%%%%%%%%%%%%%%%%%%%%%%%%
\section{Notation}
%Section 2
\label{iib}
%%%%%%%%%%%%%%%%%%%%%%%%%%%

Let us now give some notations,  definitions and properties which will be used. 
\vskip 1 mm
Throughout this section $(X,d)$ is a complete metric space.
$\R$, $\Z$ and $\N$ stand respectively for the real numbers, the integers and  the natural integers. 
We denote by $\R^+ := \{t\in\R  ; t\geq  0\}$. Let $E$ be  a topological space. We denote  by $C(E,X)$ the space of all continuous functions from $E$ into $X$.
When $J=\R$ or $J=\R^+$, we denote  by $BC(J,X)$  the space of all bounded and continuous  functions from $J$  into $X$ equipped with the sup-distance, denoted by  $\ds d_{\infty}(u, v) :=\sup_{t\in \R}d(u(t),v(t))$ when  $J=\R$ and $\ds d_{\infty,+}(u, v) :=\sup_{t\geq0}d(u(t),v(t))$) when $J=\R^+$ for $u$, $v \in BC(J,X)$. The metric spaces $(BC(\R,X),d_{\infty})$ and  $(BC(\R^+,X),d_{\infty,+})$) are complete.
\vskip 5 mm
We now give some definitions and properties on almost periodic, asymptotically almost periodic functions with values in a given complete metric space. 
\vskip 2 mm
A subset $D$ of $\R$ (respectively of $\R^+$) is said to be {\it relatively dense}  if
there exists $\ell>0$ such that $D\cap [\alpha,\alpha+\ell]\not=\emptyset$ for all $\alpha\in\R$ (respectively $\alpha\geq0$).
 A continuous function $u:\R\to X$ is said to be {\it almost periodic (in the sense of Bohr)} if  for each $\varepsilon > 0$, the set of $\varepsilon$-almost periods:
$\displaystyle\mathcal{P}(u,\varepsilon)=\set{\tau\in\R}{\sup_{t \in \mathbb{R}} d(u (t + \tau) , u (t))\leq  \varepsilon}$
 is relatively dense in $\R$.  An almost periodic function $u$ has its range $u(\R)$  relatively compact, that is its closure denoted by $\rm{cl}\left(u(\R)\right)$ is a compact set of $(X,d)$.
 We denote the space of all such functions by $AP(\R,X)$. It is a closed metric subspace of $(BC(\R,X),d_{\infty})$.
An almost periodic function $u$ is \textit{uniformly recurrent}, that is there exists a sequence of real numbers $(\tau_n)_n$ such that $\ds\lim_{n\to+\infty}\sup_{t\in\R}d(u(t+\tau_n),u(t))=0$ and $\ds\lim_{n\to+\infty}\tau_n=+\infty$. To see that consider the Bohr's definition of $u\in AP(\R,X)$,  then the  set of $\frac{1}{n}$-almost periods  satisfies $\displaystyle\mathcal{P}(u,\frac{1}{n})\cap[n,+\infty)\not=\emptyset$, for each integer $n>0$.
A useful characterization of almost periodic functions was given by Bochner. 
The Bochner's criterion which may be found in \cite[Bochner's theorem, p. 4]{LZ}  in the context of metric spaces. Before to cite this  criterion, we need to introduce the translation mapping of a function of $BC(\R,X)$. For $\tau\in\R$ and $u\in BC(\R,X)$, we define {\it the translation mapping} $T_{\tau}u\in BC(\R,X)$ by $T_{\tau}u(t) = u(t+\tau)$ for $t\in\R$. 
%*******************************************
\begin{theorem}[Bochner's criterion] 
%theorem 2.1
For $u\in BC(\R,X)$, the following statements are equivalent.
\vskip 2 mm
{\bf i)} $u\in AP(\R,X)$.
\vskip 2 mm
{\bf ii)} The set $\{T_{\tau}u ; \tau\in\R \}$ is relatively compact in $(BC(\R,X),d_{\infty})$. 
\end{theorem}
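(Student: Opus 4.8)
\emph{Proof strategy.} The plan is to use the standard fact that in a complete metric space a subset is relatively compact if and only if it is totally bounded, equivalently if and only if every sequence extracted from it possesses a Cauchy (hence convergent) subsequence. Since $(BC(\R,X),d_\infty)$ is complete, statement (ii) is thus equivalent to requiring that every sequence $(T_{\tau_n}u)_n$ have a $d_\infty$-convergent subsequence, and the two implications then reduce to the two halves of Bochner's classical argument.

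For (i) $\Rightarrow$ (ii) I would first record the auxiliary fact that an almost periodic $u$ is uniformly continuous on $\R$: given $\varepsilon>0$, choose an inclusion length $\ell$ for $\mathcal P(u,\varepsilon)$, use uniform continuity of $u$ on the compact interval $[-1,\ell+1]$ to produce $\delta\in(0,1)$, and for arbitrary $s,t$ with $|s-t|\le\delta$ pick an $\varepsilon$-almost period $\tau$ with $s+\tau\in[0,\ell]$ (so also $t+\tau\in[-1,\ell+1]$), which yields $d(u(s),u(t))\le 3\varepsilon$. Uniform continuity of $u$ makes $\sigma\mapsto T_\sigma u$ a (uniformly) continuous map from $\R$ into $BC(\R,X)$, so $K:=\{T_\sigma u;\sigma\in[0,\ell]\}$ is compact, hence totally bounded. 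Next, for any $\tau\in\R$ pick $s\in\mathcal P(u,\varepsilon)\cap[-\tau,-\tau+\ell]$; then $\sigma:=\tau+s\in[0,\ell]$ and $d_\infty(T_\tau u,T_\sigma u)=\sup_{t\in\R}d(u(t),u(t+s))\le\varepsilon$. Thus $\{T_\tau u;\tau\in\R\}$ lies within $\varepsilon$ of $K$, and enlarging a finite $\varepsilon$-net of $K$ produces a finite $2\varepsilon$-net of $\{T_\tau u;\tau\in\R\}$. As $\varepsilon>0$ is arbitrary this set is totally bounded, and completeness of $BC(\R,X)$ gives (ii).

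For (ii) $\Rightarrow$ (i) I would argue by contraposition. Assume $u$ is continuous but not almost periodic, so there is $\varepsilon>0$ for which $\mathcal P:=\mathcal P(u,\varepsilon)$ is not relatively dense; note that $\mathcal P$ is symmetric and contains $0$, and that $\tau'-\tau''\notin\mathcal P$ is equivalent to $d_\infty(T_{\tau'}u,T_{\tau''}u)>\varepsilon$. I would then construct a sequence $(\tau_n)_n$ with $\tau_i-\tau_j\notin\mathcal P$ for all $i\ne j$: having chosen $\tau_1,\dots,\tau_n$, set $L=\max_{i,j\le n}|\tau_i-\tau_j|$, use the failure of relative density to find an interval $(\alpha,\beta)$ of length $>2L$ disjoint from $\mathcal P$, and put $\tau_{n+1}:=\tfrac{\alpha+\beta}{2}+\tau_1$, so that $\tau_{n+1}-\tau_j\in[\tfrac{\alpha+\beta}{2}-L,\tfrac{\alpha+\beta}{2}+L]\subset(\alpha,\beta)$, whence $\tau_{n+1}-\tau_j\notin\mathcal P$ (and by symmetry $\tau_j-\tau_{n+1}\notin\mathcal P$) for every $j\le n$. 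The resulting sequence $(T_{\tau_n}u)_n$ is $\varepsilon$-separated for $d_\infty$, so it admits no Cauchy subsequence, contradicting the relative compactness in (ii). Hence every $\mathcal P(u,\varepsilon)$ is relatively dense, i.e. $u\in AP(\R,X)$.

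The generalities about total boundedness and the modulus-of-continuity bookkeeping are routine; the one genuinely delicate point is the inductive construction in (ii) $\Rightarrow$ (i), where one must control the accumulated diameter $L$ of $\{\tau_1,\dots,\tau_n\}$ and exploit both the symmetry of $\mathcal P$ and the existence of arbitrarily long gaps in $\R\setminus\mathcal P$ in order to place $\tau_{n+1}$ so that all $n$ differences $\tau_{n+1}-\tau_j$ avoid $\mathcal P$ simultaneously.
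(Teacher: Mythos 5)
The paper offers no proof of this theorem: it is quoted as a classical result with a citation to Levitan--Zhikov, so there is no in-paper argument to compare against. Your proposal is a correct, self-contained proof along the standard lines. In the direction (i) $\Rightarrow$ (ii), the chain ``almost periodic $\Rightarrow$ uniformly continuous $\Rightarrow$ $\sigma\mapsto T_\sigma u$ continuous, hence $K=\{T_\sigma u;\sigma\in[0,\ell]\}$ compact'' is sound, and the observation that every $T_\tau u$ lies within $\varepsilon$ of $K$ (via $s\in\mathcal P(u,\varepsilon)\cap[-\tau,-\tau+\ell]$ and $d_\infty(T_\tau u,T_{\tau+s}u)=\sup_{t}d(u(t),u(t+s))\le\varepsilon$) correctly upgrades a finite $\varepsilon$-net of $K$ to a finite $2\varepsilon$-net of the whole orbit; total boundedness plus completeness of $(BC(\R,X),d_\infty)$ then gives relative compactness. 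In the direction (ii) $\Rightarrow$ (i), the contrapositive construction is the delicate point and it works: with $L=\max_{i,j\le n}|\tau_i-\tau_j|$ and a gap $(\alpha,\beta)$ of length $>2L$ in the complement of $\mathcal P(u,\varepsilon)$, the choice $\tau_{n+1}=\tfrac{\alpha+\beta}{2}+\tau_1$ forces $\tau_{n+1}-\tau_j\in[\tfrac{\alpha+\beta}{2}-L,\tfrac{\alpha+\beta}{2}+L]\subset(\alpha,\beta)$ for all $j\le n$, the symmetry of $\mathcal P(u,\varepsilon)$ disposes of the differences in the other order, and the resulting $\varepsilon$-separated sequence $(T_{\tau_n}u)_n$ indeed contradicts relative compactness. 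I see no gap; the only bookkeeping worth making explicit in a final write-up is that the compact set $K$ depends on $\varepsilon$ (through the inclusion length $\ell$), which is harmless for total boundedness since $\varepsilon$ is fixed before $K$ is built.
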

%*******************************************
Haraux gave a generalization   of Bochner' criterion the called {\it a simple almost-periodicity criterion} \cite[Theorem 1]{Ha2} which is useful for periodic dynamical systems. 
%*******************************************
\begin{theorem}[Haraux's criterion] \label{th3}
%theorem 2.2
Let $D$ be a relatively dense subset of $\R$.
The following statements are equivalent for $u\in BC(\R,X)$.
\vskip 2 mm
{\bf i)} $u\in AP(\R,X)$.
\vskip 2 mm
{\bf ii)} The set $\{T_{\tau}u ; \tau\in D \}$ is relatively compact in $(BC(\R,X),d_{\infty})$.
\end{theorem}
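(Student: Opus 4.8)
The plan is to treat the two implications separately, with essentially all the work in (ii) $\Rightarrow$ (i). For (i) $\Rightarrow$ (ii) I would simply invoke Bochner's criterion: if $u\in AP(\R,X)$ then $\{T_\tau u;\tau\in\R\}$ is relatively compact in $(BC(\R,X),d_\infty)$, and $\{T_\tau u;\tau\in D\}$, being a subset of a relatively compact set of a metric space, is relatively compact as well (its closure is a closed subset of a compact set).

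For (ii) $\Rightarrow$ (i) I would verify Bohr's definition directly: fix $\eps>0$ and show that the set of $\eps$-almost periods $\mathcal P(u,\eps)$ is relatively dense in $\R$. Since $\{T_\tau u;\tau\in D\}$ is relatively compact it is totally bounded, so there are $\tau_1,\dots,\tau_m\in D$ (the net points may be taken inside the set) such that the $d_\infty$-balls $B(T_{\tau_i}u,\eps)$ cover $\{T_\tau u;\tau\in D\}$. The crucial remark is that each translation $T_a$ is a $d_\infty$-isometry of $BC(\R,X)$, so after the change of variable $t\mapsto t-\tau_i$,
\[ d_\infty(T_\sigma u,T_{\tau_i}u)=\sup_{t\in\R}d\big(u(t+\sigma-\tau_i),u(t)\big); \]
hence $d_\infty(T_\sigma u,T_{\tau_i}u)\le\eps$ is exactly the statement $\sigma-\tau_i\in\mathcal P(u,\eps)$. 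Consequently $D\subseteq\bigcup_{i=1}^m\big(\tau_i+\mathcal P(u,\eps)\big)$.

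The remaining point — and the one I expect to be the only real subtlety — is to \emph{de-thicken} this inclusion: to deduce relative density of $\mathcal P(u,\eps)$ itself from relative density of the finite union $\bigcup_i(\tau_i+\mathcal P(u,\eps))$, which is relatively dense because it contains $D$. Here the finiteness and boundedness of the shifts $\tau_i$ are used quantitatively: with $\ell$ a relative-density constant for $D$ and $M:=\max_i|\tau_i|$, given $\alpha\in\R$ one picks $\sigma\in D\cap[\alpha+M,\alpha+M+\ell]$, finds $i$ with $\sigma-\tau_i\in\mathcal P(u,\eps)$, and checks $\sigma-\tau_i\in[\alpha,\alpha+\ell+2M]$; thus $\mathcal P(u,\eps)$ meets every interval of length $\ell+2M$ and is relatively dense. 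Since $\eps>0$ was arbitrary, $u\in AP(\R,X)$.

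One design choice worth flagging: I deliberately go back to Bohr's $\eps$-almost-period formulation rather than trying to verify Bochner's relative-compactness criterion for the full orbit $\{T_\tau u;\tau\in\R\}$ directly. The latter route is also feasible — given a sequence $(t_n)$ in $\R$ one approximates each $t_n$ by some $\sigma_n\in D\cap[t_n,t_n+\ell]$, extracts $d_\infty$-convergence of $T_{\sigma_n}u$ and convergence of $\sigma_n-t_n\in[0,\ell]$, and recombines via $T_{t_n}u=T_{-(\sigma_n-t_n)}T_{\sigma_n}u$ — but that recombination requires $u$ to be uniformly continuous, which one then has to establish first (a short argument using total boundedness, relative density, and uniform continuity of $u$ on compact intervals). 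Working with $\mathcal P(u,\eps)$ avoids this detour entirely.
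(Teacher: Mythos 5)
Your proof is correct. Note first that the paper does not actually prove Theorem \ref{th3}: it is recalled from Haraux \cite[Theorem 1]{Ha2}, and the only place the paper runs an argument of this type is in Corollary \ref{cor1} (ii) $\Rightarrow$ (i) (and the sketch in Corollary \ref{prop2}), where the route is exactly the one you flag as the alternative: first establish uniform continuity of $u$ from equicontinuity of the relatively compact family on a compact interval, then decompose an arbitrary translate as $t_n=\tau_n+\sigma_n$ with $\tau_n\in D$ and $\sigma_n\in[0,\ell]$, pass to convergent subsequences, and recombine to get relative compactness of the full orbit, finally invoking Bochner's criterion. Your argument is genuinely different and, for this particular statement, more economical: you verify Bohr's definition directly. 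The $\eps$-net in $\{T_\tau u;\tau\in D\}$ with centers $T_{\tau_i}u$, the isometry identity $d_\infty(T_\sigma u,T_{\tau_i}u)=\sup_{t}d(u(t+\sigma-\tau_i),u(t))$ giving $D\subseteq\bigcup_i(\tau_i+\mathcal P(u,\eps))$, and the de-thickening via $M=\max_i|\tau_i|$ (so that $\mathcal P(u,\eps)$ meets every interval of length $\ell+2M$) are all sound; since $u$ is assumed continuous, this is precisely the paper's definition of $AP(\R,X)$, and no separate uniform-continuity step is needed. What the paper's route buys instead is reusability: the compactness-transfer mechanism adapts to the half-line setting of Corollary \ref{cor1}, where the Bohr-type characterization of $AAP(\R^+,X)$ is less convenient to verify directly; your route buys a shorter, self-contained proof of Theorem \ref{th3} that bypasses Bochner's criterion entirely.
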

%*******************************************
Periodic functions, which are a special case of almost periodic functions,  are also characterized in terms of Bochner's criterion. This criterion is a direct consequence of a result of Haraux.
%*******************************************
\begin{theorem}\cite[Consequence of Proposition 2]{Ha4}  \label{th1}
%theorem 2.3
The following statements are equivalent for $u\in BC(\R,X)$.
\vskip 2 mm
{\bf i)} $u$ is periodic.
\vskip 2 mm
{\bf ii)} The set $\{T_{\tau}u ; \tau\in\R \}$ is a compact set of $(BC(\R,X),d_{\infty})$. 
\end{theorem}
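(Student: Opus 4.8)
The plan is to prove the two implications separately; $(i)\Rightarrow(ii)$ is routine, and all the content lies in $(ii)\Rightarrow(i)$.

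For $(i)\Rightarrow(ii)$ I would argue as follows. If $u$ is $\omega$-periodic with $\omega>0$, then $u$ is uniformly continuous on $\R$ (a continuous periodic function is uniformly continuous, by compactness of a period interval), so the translation map $\Phi\colon\R\to BC(\R,X)$, $\Phi(\tau):=T_{\tau}u$, is continuous, since $d_{\infty}(T_{\tau}u,T_{\tau'}u)\le\sup\{\,d(u(s),u(s'))\;;\;|s-s'|\le|\tau-\tau'|\,\}$. Because $T_{\tau+\omega}u=T_{\tau}(T_{\omega}u)=T_{\tau}u$, the map $\Phi$ is $\omega$-periodic, so $\{T_{\tau}u\;;\;\tau\in\R\}=\Phi(\R)=\Phi([0,\omega])$ is the continuous image of a compact set, hence compact in $(BC(\R,X),d_{\infty})$.

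For $(ii)\Rightarrow(i)$, set $K:=\{T_{\tau}u\;;\;\tau\in\R\}$ and assume $K$ is compact. Then $K$ is in particular relatively compact, so Bochner's criterion yields $u\in AP(\R,X)$; hence $u$ is uniformly continuous, so $\Phi$ is continuous with $\Phi(\R)=K$. I would then argue by contradiction: suppose $u$ is not periodic. Then $T_{a}u\ne u$ for every $a\ne 0$, which forces $\Phi$ to be injective (if $T_{\tau}u=T_{\tau'}u$ then $\tau-\tau'$ is a period of $u$, so $\tau=\tau'$), whence $\Phi\colon\R\to K$ is a continuous bijection. Moreover each $T_{a}$ is an isometric bijection of $BC(\R,X)$ carrying $K$ onto $K$, so $\R$ acts on the compact metric space $K$ by homeomorphisms, and transitively, since $K$ is a single orbit.

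The decisive step — and the one I expect to be the main obstacle — is to extract a contradiction from the coexistence of $K$ compact, the continuous bijection $\Phi\colon\R\to K$, and the transitive action. My plan is to use the Baire category theorem: $K=\bigcup_{n\ge 1}\Phi([-n,n])$ is a countable union of compact (hence closed) subsets of the nonempty complete metric space $K$, so some $\Phi([-N,N])$ contains a nonempty open subset $W$ of $K$. By transitivity $\{T_{a}(W)\;;\;a\in\R\}$ covers $K$, so compactness gives $K=T_{a_{1}}(W)\cup\dots\cup T_{a_{m}}(W)$; since $T_{a}(\Phi([-N,N]))=\Phi([a-N,a+N])$, this means $K=\Phi(B)$ with $B:=\bigcup_{i=1}^{m}[a_{i}-N,a_{i}+N]$ bounded, and injectivity of $\Phi$ then forces $\R=B$ — absurd. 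Hence $u$ is periodic. (An alternative route to the same contradiction: $g(s):=d_{\infty}(T_{s}u,u)$ is a translation-invariant metric on $\R$ for which $\Phi$ is an isometry onto $K$, so compactness of $K$ would make $(\R,g)$ compact although the identity $(\R,|\cdot|)\to(\R,g)$ is a continuous bijection onto it; but the Baire argument is shorter.)
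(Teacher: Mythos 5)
Your proof is correct. A preliminary remark: the paper does not actually prove this theorem --- it is quoted as a consequence of Proposition 2 of Haraux \cite{Ha4}, and the paper's later arguments (e.g.\ the proof of Corollary \ref{cor4}) reduce to this cited result. So your write-up supplies a self-contained argument for a statement the paper outsources. Your direction i) $\Rightarrow$ ii) is the same computation the paper performs for the analogous implication of Corollary \ref{cor4}: uniform continuity of a continuous periodic function, continuity and $\omega$-periodicity of $\tau\mapsto T_{\tau}u$, and the image of $[0,\omega]$. The substance is in ii) $\Rightarrow$ i): after using Bochner's criterion to get $u\in AP(\R,X)$, hence $u$ uniformly continuous (a standard property of metric-space-valued almost periodic functions, e.g.\ Levitan--Zhikov, which you should cite since it is exactly what makes $\Phi$ continuous and the sets $\Phi([-n,n])$ compact), you note that non-periodicity forces $\Phi$ to be injective, and you rule out the resulting configuration --- a compact set $K=\Phi(\R)$ written as a countable union of compact pieces, carrying a transitive action of $\R$ by isometries --- via Baire category and a finite subcover. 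Every step checks: each $T_a$ is an isometry of $K$ onto $K$, the translates $T_a(W)$ are open in $K$ and cover it by transitivity, and injectivity turns $K=\Phi(B)$ with $B$ bounded into $\R=B$, a contradiction. The usual route (closer to Haraux's) is to consider the closed subgroup $H=\set{a\in\R}{T_a u=u}$ of $\R$ and, when $H=\{0\}$, to identify $K$ with a compact group quotient of $\R$, which needs some topological-group bookkeeping; your Baire argument is more elementary and avoids putting any group structure on $K$, at the modest cost of the covering argument. Both are valid; yours has the advantage of being entirely metric-space elementary.
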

%*******************************************
\vskip 2 mm
For some preliminary results on almost periodic functions with values in a given complete metric space, we refer to the book of Levitan-Zhikov \cite{LZ} and in the special  case of Banach spaces to the book of Amerio-Prouse \cite{Am-Pr}.
\vskip 3 mm
The notion of asymptotic almost periodicity was first introduced by Fr\'echet \cite{Fr} in 1941 in the case where $X=\Com$.
A continuous function $u:\R^+\to X$ is said to be {\it asymptotically almost  periodic} if there exists  $v\in AP(\R,X)$  such that $\ds\lim_{t\to\infty}d(u(t),v(t))=0$.
An asymptotic almost periodic function $u$ has its range $u(\R^+)$  relatively compact.
 We denote the space of all such functions by $AAP(\R^+,X)$. It is a closed metric subspace of $(BC(\R^+,X),d_{\infty,+})$.
An asymptotic almost periodicity function $u:\R^+\to X$ is characterized by {\it $u\in APP(\R^+,X)$ if and only if $u\in C(\R^+,X)$ and  for each $\varepsilon > 0$, there exists $M\geq0$ such that the  
$\displaystyle\set{\tau\geq0}{\sup_{t \geq M} d(u (t + \tau) , u (t))\leq  \varepsilon}$
 is relatively dense in $\R^+$} \cite[Theorems 1.3]{RS2}.
In the context of metric spaces, Ruess and Summers give  a characterization of asymptotically almost periodic functions in the spirit of Bochner's criterion.
To prove this characterization, Ruess and Summers use results from the paper \cite{RS3} by the same authors.
For $\tau\geq0$ and $u\in BC(\R^+,X)$, we define {\it the translation mapping} $T^+_{\tau}u\in BC(\R^+,X)$ by $T^+_{\tau}u(t) = u(t+\tau)$ for $t\geq0$. 
%*******************************************
\begin{theorem}\cite[a part of Theorems 1.2 \& 1.3]{RS2} \label{cor2}
%theorem 2.4
Let $(X,d)$ be a complete metric space. 
For $u\in BC(\R^+,X)$, the following statements are equivalent.
\vskip 2 mm
{\bf i)} $u\in AAP(\R^+,X)$.
\vskip 2 mm
{\bf ii)} The set $\{T^+_{\tau}u ; \tau\geq0 \}$ is relatively compact in $(BC(\R^+,X),d_{\infty,+})$.
\end{theorem}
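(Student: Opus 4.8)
The plan is to prove the two implications separately, taking Bochner's criterion as a black box and exploiting the elementary fact that the one-sided translation semigroup is nonexpansive, namely $d_{\infty,+}(T^+_h f,T^+_h g)\le d_{\infty,+}(f,g)$ for all $h\ge 0$ and $f,g\in BC(\R^+,X)$. For (i) $\Rightarrow$ (ii), write $\delta(t):=d(u(t),v(t))$, where $v\in AP(\R,X)$ and $\delta(t)\to 0$ as $t\to+\infty$; one first records that $u$ is uniformly continuous on $\R^+$ (combining the uniform continuity of $v$ on $\R$, the smallness of $\delta$ far out, and the uniform continuity of $u$ on a compact initial interval). Given any sequence $(\tau_n)_n$ in $\R^+$: if it has a bounded subsequence, extract $\tau_{n_k}\to\tau_*$ and use uniform continuity of $u$ to get $T^+_{\tau_{n_k}}u\to T^+_{\tau_*}u$ in $d_{\infty,+}$; if $\tau_n\to+\infty$, apply Bochner's criterion to $v$ to obtain a subsequence with $T_{\tau_{n_k}}v\to w\in AP(\R,X)$ in $d_\infty$, and then for $t\ge 0$
\[
d(u(t+\tau_{n_k}),w(t))\le\delta(t+\tau_{n_k})+d(v(t+\tau_{n_k}),w(t))\le\sup_{s\ge\tau_{n_k}}\delta(s)+d_\infty(T_{\tau_{n_k}}v,w)\to 0 ,
\]
so $T^+_{\tau_{n_k}}u\to w|_{\R^+}$ in $d_{\infty,+}$; in either case a convergent subsequence is produced, which gives (ii).

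For (ii) $\Rightarrow$ (i), set $\mathcal H=\{T^+_\tau u\,;\,\tau\ge 0\}$ and $K=\mathrm{cl}(\mathcal H)$, compact by hypothesis. First I would show that $u$ is uniformly continuous on $\R^+$ by an Arzel\`a--Ascoli argument: if not, pick $s_n,t_n\ge 0$ with $|s_n-t_n|\to 0$ and $d(u(s_n),u(t_n))\ge\varepsilon_0$, translate by $\tau_n=\min(s_n,t_n)$, pass to a subsequence with $T^+_{\tau_n}u\to f\in K$, and contradict the continuity of $f$ at $0$; consequently $\mathcal H$, hence $K$, is a uniformly equicontinuous family. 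Next I would introduce the $\omega$-limit set $\Omega=\bigcap_{s\ge 0}\mathrm{cl}(\{T^+_\tau u\,;\,\tau\ge s\})\subseteq K$, which is non-empty and compact, and note by the usual compactness argument that the $d_{\infty,+}$-distance from $T^+_\tau u$ to $\Omega$ tends to $0$ as $\tau\to+\infty$. Granting the \emph{Claim} below, that every $w\in\Omega$ is the restriction to $\R^+$ of some $\widehat w\in AP(\R,X)$, the proof concludes quickly: choose $\tau_k\to+\infty$ and $w\in\Omega$ with $\varepsilon_k:=d_{\infty,+}(T^+_{\tau_k}u,w)\to 0$; then $v_k:=T_{-\tau_k}\widehat w\in AP(\R,X)$ satisfies $\sup_{t\ge\tau_k}d(u(t),v_k(t))\le\varepsilon_k$, whence $d(v_k(t),v_m(t))\le\varepsilon_k+\varepsilon_m$ for $t\ge\max(\tau_k,\tau_m)$; since $t\mapsto d(v_k(t),v_m(t))$ is a real almost periodic function whose supremum over $\R$ equals its supremum over any half-line (uniform recurrence), we get $d_\infty(v_k,v_m)\le\varepsilon_k+\varepsilon_m$, so $(v_k)_k$ is Cauchy in the complete space $AP(\R,X)$, and its limit $v$ obeys $d(u(t),v(t))\le\varepsilon_k+d_\infty(v_k,v)$ for $t\ge\tau_k$, i.e.\ $d(u(t),v(t))\to 0$ and $u\in AAP(\R^+,X)$.

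The Claim is the step I expect to be the main obstacle. Given $w=\lim_n T^+_{\tau_n}u\in\Omega$ with $\tau_n\to+\infty$, one extends $w$ to $\widehat w:\R\to X$ by a diagonal procedure: after passing to a subsequence, $T^+_{\tau_n-m}u$ converges in $d_{\infty,+}$ for every $m\in\N$, the limits are coherent under $T^+_1$ and glue to a bounded, uniformly continuous $\widehat w$ with $\widehat w|_{\R^+}=w$ and $\widehat w(t)=\lim_n u(t+\tau_n)$ for every $t\in\R$; moreover $T_\tau\widehat w|_{\R^+}\in K$ for every $\tau\ge 0$. The genuinely delicate point is to promote this one-sided relative compactness to relative compactness of $\{T_\tau\widehat w\,;\,\tau\in\R\}$ in $(BC(\R,X),d_\infty)$ — the family is already uniformly equicontinuous with relatively compact range, so the issue is purely that of upgrading uniform convergence on compacta of the backward translates to uniform convergence on $\R$; this is where the nonexpansiveness of $(T^+_h)_{h\ge 0}$ must be brought to bear, and also where the finer analysis of \cite{RS2,RS3} genuinely enters. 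Once $\widehat w\in AP(\R,X)$ is established, Bochner's criterion closes the argument, the rest being soft.
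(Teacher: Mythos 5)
First, note that the paper does not prove Theorem \ref{cor2} at all: it is quoted from Ruess--Summers \cite{RS2}, so there is no in-paper proof to compare against. Judged on its own terms, your implication i) $\Rightarrow$ ii) is complete and correct. The implication ii) $\Rightarrow$ i), however, contains a genuine gap exactly where you flag it: the \emph{Claim} that the limit $w\in\Omega$ you use extends to an element of $AP(\R,X)$ is never proved. You construct the extension $\widehat w$ by a diagonal argument and then defer the decisive step --- deducing almost periodicity of $\widehat w$ from compactness properties of its translates --- to ``the finer analysis of \cite{RS2,RS3}''. Since the entire content of the theorem is precisely this passage from one-sided compactness to (asymptotic) almost periodicity, leaving that step to the literature means the implication is not established as written.

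The gap is closable, and more easily than you suggest. Your diagonal construction already gives $\widehat w(t)=\lim_n u(t+\tau_n)$ uniformly on every half-line $[-m,+\infty)$; hence for every $\sigma\in\R$, choosing $m\geq-\sigma$ and using the nonexpansiveness of $T^+_{\sigma+m}$, one gets $R(T_\sigma\widehat w)=\lim_n T^+_{\sigma+\tau_n}u\in K$. Thus $\{R(T_\sigma\widehat w)\,;\,\sigma\in\R\}$ is relatively compact in $(BC(\R^+,X),d_{\infty,+})$, which is exactly hypothesis iii) of Theorem \ref{th2} of the paper; that theorem (whose proof uses only Bochner's criterion, Arzel\`a--Ascoli and Lemma \ref{lem2}, so there is no circularity) then yields $\widehat w\in AP(\R,X)$ at once. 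In other words, you do not need to ``upgrade uniform convergence on compacta of the backward translates to uniform convergence on $\R$'' by hand --- that upgrade is precisely what Theorem \ref{th2} provides, and your slight misdiagnosis of the remaining difficulty hides the fact that the paper's main result closes it. With the Claim settled this way, the rest of your argument (the estimate $d_\infty(v_k,v_m)\le\varepsilon_k+\varepsilon_m$ obtained because a real almost periodic function has the same supremum on every half-line, hence the Cauchy property of $(v_k)_k$ in the complete space $AP(\R,X)$) is correct and completes ii) $\Rightarrow$ i).
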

%*******************************************
For some preliminary results on asymptotically almost periodic functions, we refer to the book of Yoshizawa  \cite{Yo} in  the case where $X$ is a finite dimensional space, to the book of Zaidman \cite{Za} where $X$ is a Babach space and to Ruess and Summers \cite{RS1, RS2, RS3} in the general case: $X$ is a complete metric space.

%%%%%%%%%%%%%%%%%%%%%%%%%%%
\section{An improvement of Bochner's criterion}
%Section 3
\label{ii}
%%%%%%%%%%%%%%%%%%%%%%%%%%%

An almost periodic function is  characterized by the Bochner's criterion, recalled in Section \ref{iib}.
Our main result is an extension of Bochner's criterion. 
Then we deduce  new  proofs which are direct and simpler on known results on the solutions of autonomous dynamic systems.
Before to state our extension of Bochner's criterion, we need to introduce the restriction operator $R:BC(\R,X)\to BC(\R^+,X)$ defined by $R(u)(t):=u(t)$ for $t\geq0$ and $u\in  BC(\R,X)$.

%*******************************************
\begin{theorem} \label{th2}
%theorem 3.1
Let $(X,d)$ be a complete metric space. 
For $u\in BC(\R,X)$ the following statements are equivalent.
\vskip 2 mm
{\bf i)} $u\in AP(\R,X)$.
\vskip 2 mm
{\bf ii)} The set $\{T_{\tau}u ; \tau\geq0 \}$ is relatively compact in $(BC(\R,X),d_{\infty})$.
\vskip 2 mm
{\bf iii)} The set $\{R(T_{\tau}u) ; \tau\in\R \}$ is relatively compact in $(BC(\R^+,X),d_{\infty,+})$. 
\end{theorem}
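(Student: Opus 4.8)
The plan is to reduce everything to Bochner's criterion (the unlabelled theorem above), so that the only real content lies in the two implications (ii)$\Rightarrow$(i) and (iii)$\Rightarrow$(i). The implication (i)$\Rightarrow$(ii) is immediate: by Bochner's criterion $\{T_{\tau}u ; \tau\in\R\}$ is relatively compact in $(BC(\R,X),d_{\infty})$, hence so is its subset $\{T_{\tau}u ; \tau\geq0\}$. For (i)$\Rightarrow$(iii) one observes that $R$ is $1$-Lipschitz (indeed $d_{\infty,+}(R(u),R(v))\leq d_{\infty}(u,v)$), so it maps the relatively compact set $\{T_{\tau}u ; \tau\in\R\}$ to the relatively compact set $\{R(T_{\tau}u) ; \tau\in\R\}$. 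For each of the two converse implications it suffices, again by Bochner's criterion, to prove that $\{T_{\tau}u ; \tau\in\R\}$ is relatively compact in $(BC(\R,X),d_{\infty})$.

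For (ii)$\Rightarrow$(i) I would set $K:={\rm cl}\{T_{\tau}u ; \tau\geq0\}$, which is compact by hypothesis. Each $T_{\sigma}$ is an isometry of $BC(\R,X)$ onto itself (with inverse $T_{-\sigma}$), and for $\sigma\geq0$ one has $T_{\sigma}(\{T_{\tau}u ; \tau\geq0\})=\{T_{\tau}u ; \tau\geq\sigma\}\subseteq\{T_{\tau}u ; \tau\geq0\}$, so $T_{\sigma}(K)\subseteq K$ by continuity. Thus $(T_{\sigma}|_{K})_{\sigma\geq0}$ is a semigroup of isometries of the compact metric space $K$ into itself. Here I would invoke the classical fact that an isometry $f$ of a compact metric space into itself is onto: otherwise $\delta:={\rm dist}(p,f(K))>0$ for some $p$, and $d(f^{m}(p),f^{n}(p))=d(p,f^{m-n}(p))\geq\delta$ for all $m>n$, contradicting compactness. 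Hence each $T_{\sigma}|_{K}$ is a bijection of $K$, and since $u=T_{0}u\in K$, for any $\tau<0$ the surjectivity of $T_{-\tau}|_{K}$ yields $v\in K$ with $T_{-\tau}v=u$, whence $v=T_{\tau}u$. Therefore $\{T_{\tau}u ; \tau\in\R\}\subseteq K$ is relatively compact.

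For (iii)$\Rightarrow$(i) I would set $L:={\rm cl}\{R(T_{\tau}u) ; \tau\in\R\}$, compact by hypothesis, and use the half-line translation operators $T^+_{\sigma}$ ($\sigma\geq0$) on $BC(\R^+,X)$. These are only non-expansive in general, but they satisfy $T^+_{\sigma}\circ R=R\circ T_{\sigma}$, so that $T^+_{\sigma}(\{R(T_{\tau}u) ; \tau\in\R\})=\{R(T_{\tau}u) ; \tau\in\R\}$ (here it is essential that $\tau$ runs over all of $\R$), and hence $T^+_{\sigma}(L)=L$ by continuity. Now I would use the second classical fact: a surjective non-expansive self-map of a compact metric space is an isometry, hence bijective. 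This allows one to extend $(T^+_{\sigma}|_{L})_{\sigma\geq0}$ to a one-parameter group $(T^+_{\sigma}|_{L})_{\sigma\in\R}$ of isometries of $L$ with $T^+_{\sigma}(R(T_{\tau}u))=R(T_{\tau+\sigma}u)$ for all real $\sigma,\tau$; in particular $T^+_{\sigma}(R(u))=R(T_{\sigma}u)$, so that, writing ${\rm ev}_{0}(w):=w(0)$ (which is $1$-Lipschitz from $(BC(\R^+,X),d_{\infty,+})$ to $(X,d)$), one has $u(t)={\rm ev}_{0}(T^+_{t}(R(u)))$ for every $t\in\R$. Given $(\tau_{n})_{n}$ in $\R$, compactness of $L$ gives a subsequence with $T^+_{\tau_{\phi(n)}}(R(u))\to w_{0}$ in $(BC(\R^+,X),d_{\infty,+})$, and then for all $t\in\R$
\[
d\big(u(t+\tau_{\phi(n)}),{\rm ev}_{0}(T^+_{t}(w_{0}))\big)=d\big({\rm ev}_{0}(T^+_{t}(T^+_{\tau_{\phi(n)}}(R(u)))),{\rm ev}_{0}(T^+_{t}(w_{0}))\big)\leq d_{\infty,+}\big(T^+_{\tau_{\phi(n)}}(R(u)),w_{0}\big),
\]
using that $T^+_{t}|_{L}$ is an isometry and ${\rm ev}_{0}$ is $1$-Lipschitz. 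The right-hand side tends to $0$ independently of $t$, so $(T_{\tau_{\phi(n)}}u)_{n}$ is uniformly Cauchy, hence uniformly convergent, on $\R$; thus $\{T_{\tau}u ; \tau\in\R\}$ is relatively compact.

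The routine parts are the two $1$-Lipschitz estimates and the bookkeeping with the translation semigroups. The crux — and the only real obstacle — is the pair of facts about compact metric spaces: that an isometry into itself is onto (elementary), and that a surjective non-expansive self-map is an isometry. The second is the delicate one and is exactly what compensates for the failure of $T^+_{\sigma}$ to be an isometry on the half-line; in the final write-up it should be isolated as a preliminary lemma (proved, e.g., by an iterated-preimage argument) or quoted with a precise reference.
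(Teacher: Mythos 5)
Your argument is correct, but it takes a genuinely different route from the paper's. For ii) $\Rightarrow$ i) the paper simply observes the reflection identity $d_{\infty}(T_{\tau_1}u,T_{\tau_2}u)=d_{\infty}(T_{-\tau_1}u,T_{-\tau_2}u)$ (substitute $t\mapsto t-\tau_1-\tau_2$), which transports relative compactness of $\{T_{\tau}u ; \tau\geq0 \}$ to $\{T_{\tau}u ; \tau\leq0 \}$ in one line; your dynamical argument (an isometric self-embedding of a compact metric space is onto, hence the negative translates already lie in $\mathrm{cl}\left(\{T_{\tau}u ; \tau\geq0 \}\right)$) is valid but proves more than is needed. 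The real divergence is in handling iii): the paper proves iii) $\Rightarrow$ ii) by introducing the evaluation maps $\phi_{\tau}(h)=h(\tau)$, $\tau\geq0$, on the compact hull $\mathcal{H}=\mathrm{cl}\left(\{R(T_{t}u) ; t\in\R \}\right)$, applying the Arzel\`a--Ascoli theorem in $C(\mathcal{H},X)$, and transferring compactness back through the identity $d_{C}(\phi_{\tau_1},\phi_{\tau_2})=d_{\infty}(T_{\tau_1}u,T_{\tau_2}u)$ via Lemma \ref{lem2}; you instead go to i) directly by upgrading the translation semigroup $(T^{+}_{\sigma}|_{L})_{\sigma\geq0}$ to a one-parameter group of isometries of $L$ via the Freudenthal--Hurewicz theorem (a surjective non-expansive self-map of a compact metric space is an isometry) and then reading off the sequential form of Bochner's criterion on all of $\R$. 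Both are sound: the paper's route needs only Arzel\`a--Ascoli, while yours hinges on the Freudenthal--Hurewicz lemma, which is true and classical but nontrivial, so (as you note yourself) it must be proved or precisely cited as a separate preliminary lemma before the write-up is complete. Your supporting steps --- the inclusion $T_{\sigma}(K)\subseteq K$, the equality $T^{+}_{\sigma}(L)=L$ (where the inclusion $L\subseteq T^{+}_{\sigma}(L)$ uses that the continuous image of the compact set $L$ is closed), and the final uniform estimate via $\mathrm{ev}_{0}$ --- are all correct.
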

%*******************************************

In our results, the compactness and the relative compactness of a set often intervene. To prove them, we will often use the following result whose proof is obvious. Recall that a set $A$ of a metric space $(E,d)$ is relatively compact if its closure denoted by $\rm{cl}\left(A\right)$ is a compact set of $(E,d)$.

%*******************************************
\begin{lemma} \label{lem2}
%lemma 3.2
Let $E$ be a set,  $(G_1,d_1)$ and $(G_2,d_2)$ be two metric spaces. 
Let $u:E\to G_1$ and $v:E\to G_2$ be two functions. Assume   there exists $M>0$ such that
\begin{equation*}
\forall x_1 , x_2 \in E , \quad d_1(u(x_1),u(x_2)) \leq M d_2(v(x_1),v(x_2)) . 
\end{equation*}
Then the following statements hold.
\vskip 2 mm
{\bf i)} If the metric space $(G_1,d_1)$ is complete and $v(E)$ is relatively compact in $(G_2,d_2)$, then $u(E)$ is relatively compact in $(G_1,d_1)$.
\vskip 2 mm
{\bf ii)} If $v(E)$ is a compact set of $(G_2,d_2)$, then $u(E)$ is a compact set of $(G_1,d_1)$.
\end{lemma}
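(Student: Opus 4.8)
The plan is to argue by sequential compactness, using that the displayed inequality forces the correspondence $v(x)\mapsto u(x)$ to be well defined and $M$-Lipschitz, hence to carry Cauchy sequences to Cauchy sequences. Equivalently, one may observe once and for all that the hypothesis produces a well-defined $M$-Lipschitz map $f\colon v(E)\to G_1$ with $f\circ v=u$, and then transport compactness through $f$; I will use the sequential formulation as the main argument since it is the most self-contained.

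For \textbf{(i)}, I would take an arbitrary sequence $(u(x_n))_n$ in $u(E)$ and consider the companion sequence $(v(x_n))_n$ in $v(E)$. Since $v(E)$ is relatively compact in $(G_2,d_2)$, after extraction we may assume $(v(x_{\phi(n)}))_n$ converges in $G_2$, hence is Cauchy. The inequality $d_1(u(x_1),u(x_2))\le M\,d_2(v(x_1),v(x_2))$ then shows $(u(x_{\phi(n)}))_n$ is Cauchy in $(G_1,d_1)$, and by completeness of $G_1$ it converges to a point of $G_1$. Thus every sequence in $u(E)$ has a subsequence converging in $G_1$, i.e. $\mathrm{cl}(u(E))$ is sequentially compact; being a closed subset of a metric space this means $\mathrm{cl}(u(E))$ is compact, so $u(E)$ is relatively compact in $(G_1,d_1)$.

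For \textbf{(ii)}, I would repeat the argument, but now using that $v(E)$ is compact: the convergent subsequence $(v(x_{\phi(n)}))_n$ has its limit of the form $v(x_0)$ for some $x_0\in E$, and then $d_1(u(x_{\phi(n)}),u(x_0))\le M\,d_2(v(x_{\phi(n)}),v(x_0))\to 0$, so $(u(x_{\phi(n)}))_n$ converges to $u(x_0)\in u(E)$. Hence $u(E)$ is sequentially compact, therefore compact. Note that completeness of $G_1$ is not invoked here, since the limit is exhibited inside $u(E)$ itself.

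There is no real obstacle — this is the \emph{obvious} lemma the paper cites — but the one point to keep straight is the asymmetry between the two parts: in (i) the limit of the extracted $u$-sequence is produced by completeness of the target, whereas in (ii) it is produced by compactness of $v(E)$, which returns a preimage $x_0$ and thereby places the limit back in $u(E)$. Accordingly (ii) does not need $G_1$ complete. (If one prefers the structural route: for (i) extend the $M$-Lipschitz map $f$ to the compact set $\mathrm{cl}(v(E))$ using completeness of $G_1$ and take its image, which is compact and contains $u(E)$; for (ii) just take $f(v(E))$, the continuous image of a compact set.)
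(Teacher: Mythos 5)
Your proof is correct; note that the paper itself gives no argument for this lemma, simply declaring its proof obvious. Your sequential-compactness argument (Cauchy transfer plus completeness for (i), compactness of $v(E)$ returning a preimage $x_0$ for (ii)) is exactly the routine verification the author had in mind, and you correctly identify why completeness of $G_1$ is needed only in part (i).
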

%*******************************************

%*******************************************
\begin{proof}[Proof of Theorem \ref{th2}] 
{\bf i)} $\Longrightarrow$ {\bf iii)}. It is obvious by using the Bochner's criterion and the continuity of the restriction operator $R$.
\vskip 2 mm
{\bf iii)} $\Longrightarrow$ {\bf ii)}. 
The set $u(\R) = \{R(T_{\tau}u)(0) ;  \tau\in\R\}$ is relatively compact in $X$ as the range of $\{R(T_{\tau}u) ; \tau\in\R \}$ by the continuous evaluation map at $0$ from $BC(\R^+, X)$ into $X$.
By assumption, $\mathcal{H}:=\rm{cl}\left(\set{R(T_{t}u)}{t\in\R}\right)$ is a compact set of $(BC(\R^+,X),d_{\infty,+})$. For all $\tau\geq0$, we define $\phi_\tau:\mathcal{H}\to X$ by $\phi_\tau(h)=h(\tau)$. The functions $\phi_\tau$ are $1$-Lipschitz continuous and for each $t\in\R$, the set $\set{\phi_\tau(R(T_{t}u))=u(\tau+t)}{\tau\geq0}$ is included in the relatively compact set  $u(\R)$.  By density of $\set{R(T_{t}u)}{t\in\R}$ in $\mathcal{H}$ and the continuity of $\phi_\tau$, it follows that $\set{\phi_\tau(h)}{\tau\geq0}$ is relatively  compact in $X$ for each $h\in \mathcal{H}$. 
According to Arzel\` a-Ascoli's theorem \cite[Theorem 3.1, p. 57]{La},
 the set $\set{\phi_\tau}{\tau\geq0}$ is relatively compact in $C(\mathcal{H},X)$ equipped with the sup-norm denoted by $d_C$. 
  From the density of $\set{R(T_{t}u)}{t\in\R}$ in $\mathcal{H}$ and   the continuity of $\phi_\tau$,  we deduce that for $\tau_1$ and $\tau_2\geq0$,
 $\ds \sup_{h\in\mathcal{H}}d(\phi_{\tau_1}(h),\phi_{\tau_2}(h)) = \sup_{t\in\R}d\left(\phi_{\tau_1}(R(T_{t}u)),\phi_{\tau_2}(R(T_{t}u))\right) = \sup_{t\in\R}d\left(u(\tau_1+t),u(\tau_2+t)\right)  = \sup_{t\in\R}d\left(
 T_{\tau_1}u(t),T_{\tau_2}u(t)\right)$, then 
$\ds d_{C}(\phi_{\tau_1},\phi_{\tau_2}) = d_{\infty}\left(T_{\tau_1}u,T_{\tau_2}u\right)$.
From Lemma \ref{lem2},
 it follows that  $\{T_{\tau}u ; \tau\geq0 \}$ is relatively compact in the complete metric space $(BC(\R,X),d_{\infty})$ since $\set{\phi_\tau}{\tau\geq0}$ is also one in $(C(\mathcal{H},X),d_{C})$.
\vskip 2 mm
{\bf ii)} $\Longrightarrow$ {\bf i)}. For $\tau_1$, $\tau_2\geq0$, $\ds d_{\infty}(T_{\tau_1}u,T_{\tau_2}u):= \sup_{t\in\R}d(u(\tau_1+t),u(\tau_2+t))$.
Replacing $t$ by $t-\tau_1-\tau_2$ in the upper bound, we get $\ds d_{\infty}(T_{\tau_1}u,T_{\tau_2}u) = d_{\infty}(T_{-\tau_1}u,T_{-\tau_2}u)$. Then the set $\{T_{\tau}u ; \tau\leq0 \}= \{T_{-\tau}u ; \tau\geq0 \}$ is relatively compact in $BC(\R,X)$ since $\{T_{\tau}u ; \tau\geq0 \}$ is also one. Therefore the set $\{T_{\tau}u ; \tau\in\R \}$ is relatively compact in $BC(\R,X)$ as the union of two relatively compact sets in $BC(\R,X)$. According to Bochner's criterion, $u\in AP(\R,X)$.
\end{proof}
%*******************************************   

The connection between the almost periodicity of a solution of a dynamical system and its stability is well known (see the monograph by Nemytskii \& Stepanov \cite[Ch. 5]{NS}.
This weakened form of Bochner's criterion: Theorem \ref{th2} makes it possible to obtain direct and simpler proofs on these questions. Let us start by recalling some definitions on dynamical systems.
\vskip 1 mm
A \textit{dynamical system} or \textit{nonlinear semigroup} on a complete metric space $(X,d)$  is a one parameter family $\left( S(t) \right)_{t\geq0}$ of maps from $X$ into itself such that {\bf i)} $S(t)\in C(X,X)$ for all $t \geq 0$, {\bf ii)} $S(0)x=x$ for all $x \in X$, {\bf iii)}  $S(t+s) = S(t) \circ S(s)$ for all $s, t \geq 0$ and {\bf iv)} the mapping $S(\cdot)x\in C([0,+\infty),X)$ for all $x \in X$.
\vskip 1 mm
For each $x\in X$, \textit{the positive trajectory of $x$} is the map $S(\cdot)x:\R^+\to X$.
A function $u:\R\to X$ is called \textit{a complete trajectory} if we have $u(t+\tau)=S(\tau)u(t)$, for all $t\in\R$ and $\tau\geq0$. 
\vskip 2 mm
We will need a notion of Lagrange-type stability to ensure that a  solution with a relatively compact range is almost periodic. Recall that 
 $\left( S(t) \right)_{t\geq0}$ is  equicontinuous on a compact set $K$ of $X$, if forall $\eps>0$, there exists $\delta>0$, such that 
 $$\forall x_1, x_2\in K, \ds d(x_1,x_2)\leq\delta\implies\sup_{t\geq0}d(x_1,x_2)\leq\eps .$$
\vskip 2 mm
Using Theorem \ref{th2}, we give a new proof which is direct and simpler of the following result which can be found in \cite[Theorem 4.3.2, p. 51]{Ha3} or partly in \cite[Markov's theorem, p. 10]{LZ}.

%*******************************************
\begin{corollary} \label{cor3}
%corollary 3.3
Let $\left( S(t) \right)_{t\geq0}$ be a dynamical system  on a complete metric space $(X,d)$ and $u$ be a complete trajectory such that $u(\R)$ is relatively compact.
Then $u$ is almost periodic if and only if $\left( S(t) \right)_{t\geq0}$ is  equicontinuous on $\text{cl}\left(u(\R)\right)$ the closure of $u(\R)$.
\end{corollary}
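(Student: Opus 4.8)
The plan is to transfer everything to the compact set $K:=\text{cl}\left(u(\R)\right)$, run Arzel\`a--Ascoli there, and use Theorem \ref{th2} to link relative compactness of the positive translates with almost periodicity. First I would dispose of the routine preliminaries. A complete trajectory $u$ with relatively compact range lies in $BC(\R,X)$: boundedness follows from relative compactness of $u(\R)$, and continuity from the fact that on each half-line $[t_0,\infty)$ one has $u(t)=S(t-t_0)u(t_0)$ together with property iv) in the definition of a dynamical system. Moreover $K$ is compact and $u(\R)$ is dense in it. Since $u(t+\tau)=S(\tau)u(t)$ for $\tau\geq0$, we get $S(\tau)u(\R)\subseteq u(\R)$, hence $S(\tau)K\subseteq K$ for all $\tau\geq0$ by continuity of $S(\tau)$; in particular $\{S(\tau)x;\tau\geq0\}\subseteq K$ for each $x\in K$.

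The key point is the identity, valid for all $\tau_1,\tau_2\geq0$,
\[
\sup_{x\in K}d\big(S(\tau_1)x,S(\tau_2)x\big)=\sup_{t\in\R}d\big(u(t+\tau_1),u(t+\tau_2)\big)=d_{\infty}(T_{\tau_1}u,T_{\tau_2}u),
\]
which holds because $x\mapsto d(S(\tau_1)x,S(\tau_2)x)$ is continuous on $K$, the set $u(\R)$ is dense in $K$, and $S(\tau_i)u(t)=u(t+\tau_i)$. Writing $\Psi(\tau):=S(\tau)|_{K}$, each $\Psi(\tau)$ lies in $C(K,X)$, which is complete under the sup-distance $d_C$ since $X$ is complete and $K$ compact, and the identity reads $d_C(\Psi(\tau_1),\Psi(\tau_2))=d_{\infty}(T_{\tau_1}u,T_{\tau_2}u)$. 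Next I would observe, via Arzel\`a--Ascoli, that $\left(S(t)\right)_{t\geq0}$ is equicontinuous on $K$ if and only if $\{\Psi(\tau);\tau\geq0\}$ is relatively compact in $(C(K,X),d_C)$: the pointwise hypothesis of Arzel\`a--Ascoli is automatic because $\{\Psi(\tau)(x);\tau\geq0\}\subseteq K$ for every $x\in K$, and the equicontinuity of the family $\{\Psi(\tau);\tau\geq0\}$ on the compact set $K$ is exactly the stated equicontinuity of $\left(S(t)\right)_{t\geq0}$ on $K$.

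With these in hand both implications are short, and each uses Lemma \ref{lem2}(i) with $M=1$ via the displayed identity. If $\left(S(t)\right)_{t\geq0}$ is equicontinuous on $K$, then $\{\Psi(\tau);\tau\geq0\}$ is relatively compact in $C(K,X)$, so Lemma \ref{lem2}(i) with $G_1=BC(\R,X)$ and $G_2=C(K,X)$ yields that $\{T_{\tau}u;\tau\geq0\}$ is relatively compact in $BC(\R,X)$, whence $u\in AP(\R,X)$ by Theorem \ref{th2}. Conversely, if $u\in AP(\R,X)$, then $\{T_{\tau}u;\tau\geq0\}$ is relatively compact in $BC(\R,X)$ by Theorem \ref{th2}, and Lemma \ref{lem2}(i) now with $G_1=C(K,X)$ (which is complete) and $G_2=BC(\R,X)$ gives that $\{\Psi(\tau);\tau\geq0\}$ is relatively compact in $C(K,X)$, hence $\left(S(t)\right)_{t\geq0}$ is equicontinuous on $K$ by the Arzel\`a--Ascoli remark. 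I expect the only delicate points to be the density argument producing the exact identity above (and with it the invariance $S(\tau)K\subseteq K$) and keeping track of which space plays the role of $G_1$ in the two applications of Lemma \ref{lem2}; the rest is bookkeeping.
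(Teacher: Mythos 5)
Your proposal is correct and follows essentially the same route as the paper's proof: reduce via the identity $d_{C}(S(\tau_1)|_K,S(\tau_2)|_K)=d_{\infty}(T_{\tau_1}u,T_{\tau_2}u)$ and Lemma \ref{lem2} to the equivalence of relative compactness of $\{S(\tau)|_K;\tau\geq0\}$ in $C(K,X)$ with that of $\{T_{\tau}u;\tau\geq0\}$ in $BC(\R,X)$, then invoke Arzel\`a--Ascoli on the compact invariant set $K$ and Theorem \ref{th2}. Your version merely spells out a few preliminaries (that $u\in BC(\R,X)$, and which space plays $G_1$ in each application of Lemma \ref{lem2}) that the paper leaves implicit.
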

%*******************************************

%*******************************************
\begin{proof}
Let us denote the compact set $K:=\text{cl}\left(u(\R)\right)$.
It follows by density of $u(\R)$ in $K$ and the continuity of $S(t)$, that $\{S(t) x ; t\geq0 \}\subset K$ for each $x\in K$. According to Arzel\` a-Ascoli's theorem, $\left( S(t) \right)_{t\geq0}$ is  equicontinuous on  $K$ if and only if $\left( S(t) \right)_{t\geq0}$ is relatively compact in  $C(K,X)$.
From Theorem \ref{th2}, we have
$u\in AP(\R,X)$  if and only if $\{T_{\tau}u ; \tau\geq0 \}$ is relatively compact in $BC(\R,X)$.
Then it remains to prove that $\left( S(t) \right)_{t\geq0}$ is relatively compact in  $C(K,X)$  equipped with the sup-norm 
 if and only if $\{T_{\tau}u ; \tau\geq0 \}$ is relatively compact in $(BC(\R,X),d_{\infty})$.
This results from the following equalities, for $\tau_1$ and $\tau_2\geq0$,
$\ds\sup_{t\in\R}d\left(T_{\tau_1}u(t),T_{\tau_2}u(t)\right) = \sup_{t\in\R} d\left(S(\tau_1)u(t),S(\tau_2)u(t)\right) 
 = \sup_{x\in K} d\left(S(\tau_1)x,S(\tau_2)x\right)$ and Lemma \ref{lem2}.
\end{proof}
%*******************************************   

%*******************************************
\begin{remark}
%remark 3.4
{\bf a)} The condition of equicontinuity 
 required by Corollary \ref{cor3} is  satisfied by a bounded dynamical system
: $d\left( S(t)x_1,S(t)x_2\right) \leq Md\left(x_1,x_2\right)$ for some $M\geq1$ and in particular for a $C_0$ semigroup of contractions.
In this case, the almost periodicity of  a complete trajectory $u$ having a relatively compact range
results from Corollary \ref{cor3}. We can also obtain this result with
the implication iii) $\Longrightarrow$ i) of Theorem \ref{th2} and
the  inequality
$\ds\sup_{t\geq0}d(R(T_{\tau_1}u)(t),R(T_{\tau_2}u)(t))=\sup_{t\geq0}d\left( S(t)u(\tau_{1}),S(t)u(\tau_{2}\right)\leq M d(u(\tau_1),u(\tau_2))$ for $\tau_1$, $\tau_2\in\R$.
\vskip 2 mm
{\bf b)} For  a bounded $C_0$-semigroup $\left( S(t) \right)_{t\geq0}$, the main result of Zaidman  \cite{Za1} asserts that a positive trajectory $u$ with relatively compact range satisfies a condition called
\textit{the generalized normality property in Bochner's sense}, without concluding that $u$ is almost periodic. This condition is nothing but hypothesis iii) of Theorem \ref{th2}, so $u$ is almost periodic.
\end{remark}
%*******************************************

Using Theorem \ref{cor2}, we give a new proof which is direct and simpler of the following result which can be found in \cite[Theorem 2.2, p. 149]{RS2}.

%*******************************************
\begin{corollary} \label{cor6}
%corollary 3.5
Let $\left( S(t) \right)_{t\geq0}$ be a dynamical system  on a complete metric space $(X,d)$ and $u$ be a positive trajectory such that $u(\R^+)$ is relatively compact.
Then $u$ is asymptotically almost periodic if and only if $\left( S(t) \right)_{t\geq0}$ is  equicontinuous on $\text{cl}\left(u(\R^+)\right)$.
\end{corollary}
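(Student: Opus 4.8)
The plan is to follow the proof of Corollary \ref{cor3} almost verbatim, replacing Theorem \ref{th2} by the Ruess--Summers characterization (Theorem \ref{cor2}) and the two-sided translation family by the one-sided one. First I would set $K:=\text{cl}\left(u(\R^+)\right)$, which is compact by hypothesis. Since $u$ is a positive trajectory, $u(t)=S(t)u(0)$ for every $t\geq0$, hence $S(t)u(\R^+)=\set{u(s+t)}{s\geq0}\subset u(\R^+)$; by continuity of $S(t)$ and density of $u(\R^+)$ in $K$ this yields $S(t)K\subset K$ for all $t\geq0$. Thus each restriction $S(t)|_K$ lies in $C(K,X)$, and for every $x\in K$ the positive orbit $\set{S(t)x}{t\geq0}$ is contained in the compact set $K$. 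As in the proof of Corollary \ref{cor3}, Arzel\`a--Ascoli's theorem then gives: $\left(S(t)\right)_{t\geq0}$ is equicontinuous on $K$ if and only if $\set{S(t)|_K}{t\geq0}$ is relatively compact in $C(K,X)$ equipped with the sup-distance $d_C$.

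Next I would invoke Theorem \ref{cor2}: $u\in AAP(\R^+,X)$ if and only if $\set{T^+_\tau u}{\tau\geq0}$ is relatively compact in $(BC(\R^+,X),d_{\infty,+})$. Hence it remains to prove that $\set{S(t)|_K}{t\geq0}$ is relatively compact in $(C(K,X),d_C)$ if and only if $\set{T^+_\tau u}{\tau\geq0}$ is relatively compact in $(BC(\R^+,X),d_{\infty,+})$. This will follow from the identity, valid for all $\tau_1,\tau_2\geq0$,
\[
d_{\infty,+}\left(T^+_{\tau_1}u,T^+_{\tau_2}u\right)=\sup_{t\geq0}d\left(S(\tau_1)u(t),S(\tau_2)u(t)\right)=\sup_{x\in K}d\left(S(\tau_1)x,S(\tau_2)x\right),
\]
whose first equality uses $T^+_\tau u(t)=u(t+\tau)=S(\tau)u(t)$ and whose second equality uses density of $u(\R^+)$ in $K$ together with continuity of $S(\tau_1)$ and $S(\tau_2)$; the right-hand side is exactly $d_C\left(S(\tau_1)|_K,S(\tau_2)|_K\right)$. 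Since $(BC(\R^+,X),d_{\infty,+})$ and $(C(K,X),d_C)$ are both complete, applying Lemma \ref{lem2} with $M=1$ in each direction yields the two implications, and the corollary follows.

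The only step I expect to need genuine care is the middle equality in the display: passing from the supremum over the dense subset $u(\R^+)$ to the supremum over all of $K$. This is legitimate because $x\mapsto d\left(S(\tau_1)x,S(\tau_2)x\right)$ is continuous on $K$, by continuity of $S(\tau_1)$, $S(\tau_2)$ and of the metric $d$, so its supremum is unchanged when restricted to a dense subset. Everything else is a formal transcription of the argument for Corollary \ref{cor3} into the asymptotically-almost-periodic framework.
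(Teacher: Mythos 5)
Your proposal is correct and is exactly the argument the paper intends: the paper's proof of Corollary \ref{cor6} simply states that it is analogous to that of Corollary \ref{cor3}, with Theorem \ref{cor2} replacing Theorem \ref{th2} and $\R^+$, $AAP(\R^+,X)$ replacing $\R$, $AP(\R,X)$, which is precisely the transcription you carry out. The details you supply (invariance $S(t)K\subset K$, the Arzel\`a--Ascoli equivalence, the isometry identity justified by density and continuity, and Lemma \ref{lem2} in both directions) are all sound.
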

%*******************************************

%*******************************************
\begin{proof}
The proof is analogous to that of Corollary \ref{cor3}, using Corollary  \ref{cor2}  instead of Theorem \ref{th2} and by replacing $\R$ by $\R^+$ and $AP(\R,X)$ by $AAP(\R^+,X)$.
\end{proof}
%*******************************************   

%%%%%%%%%%%%%%%%%%%%%%%%%%%
\section{An improvement of Haraux's criterion}
%Section 4
\label{iii}
%%%%%%%%%%%%%%%%%%%%%%%%%%%

Haraux gave a generalization  of Bochner'scriterion \cite[Theorem 1]{Ha2}, the called {\it a simple almost-periodi\-city criterion} which is useful for periodic dynamical systems. From our main result, Theorem \ref{th2}, we deduce an extension of the Haraux's criterion, recalled in Section \ref{iib}.
In the same spirit, we extend the well-known characterization of asymptotically almost periodic functions.
To end this section, we give an exemple of application on
a periodic dynamical system. 
\vskip 2 mm
We give an extension of the Haraux's criterion (see Theorem \ref{th3}).
Recall that we denote by $R$ the restriction operator $R:BC(\R,X)\to BC(\R^+,X)$ defined by $R(u)(t):=u(t)$ for $t\geq0$ and $u\in  BC(\R,X)$.

%*******************************************
\begin{corollary} \label{prop2}
%corollary 4.1
Let $(X,d)$ be a complete metric space. For $u\in BC(\R,X)$ the following statements are equivalent.
\vskip 2 mm
{\bf i)} $u\in AP(\R,X)$.
\vskip 2 mm
{\bf ii)} The set $\{T_{\tau}u ; \tau\in D \}$ is relatively compact in $(BC(\R,X),d_{\infty})$ where $D$ be a relatively dense subset of $\R^+$.
\vskip 2 mm
{\bf iii)} The set $\{R(T_{\tau}u) ; \tau\in D \}$ is relatively compact in $(BC(\R^+,X),d_{\infty,+})$ where $D$ be a relatively dense subset of $\R$.
\end{corollary}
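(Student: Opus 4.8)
The plan is to bootstrap from Theorem~\ref{th2} and Theorem~\ref{th3} (Haraux's criterion), combining the two improvements. The implication {\bf i)} $\Longrightarrow$ {\bf iii)} should be immediate: if $u\in AP(\R,X)$ then by Haraux's criterion the set $\{T_{\tau}u ; \tau\in D\}$ is already relatively compact in $(BC(\R,X),d_{\infty})$ for any relatively dense $D\subseteq\R$, and applying the $1$-Lipschitz restriction operator $R$ together with Lemma~\ref{lem2}(i) transports relative compactness to $(BC(\R^+,X),d_{\infty,+})$. The implication {\bf ii)} $\Longrightarrow$ {\bf i)} should also be short: Haraux's criterion (Theorem~\ref{th3}) applies verbatim once we note that a relatively dense subset of $\R^+$ is in particular a relatively dense subset of $\R$, so relative compactness of $\{T_{\tau}u ; \tau\in D\}$ in $BC(\R,X)$ forces $u\in AP(\R,X)$.

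The substantive implication is {\bf iii)} $\Longrightarrow$ {\bf ii)}, and here I would mimic the proof of the implication {\bf iii)} $\Longrightarrow$ {\bf ii)} of Theorem~\ref{th2} almost line for line, only tracking the set $D$ instead of all of $\R^+$. Concretely: first observe that $u(\R)$ is relatively compact in $X$, since $u(\R) = \{R(T_\tau u)(t) ; \tau\in D,\ t\geq 0\}$ sits inside the closure of the range of the (relatively compact, by hypothesis) family $\{R(T_\tau u) ; \tau\in D\}$ under the evaluation maps $h\mapsto h(t)$, using that $D$ is relatively dense in $\R$ (so $\{\tau + t ; \tau\in D,\ t\geq 0\}$ is all of $\R$ up to a bounded gap, hence dense-cofinal enough to recover $u(\R)$ via continuity of $u$). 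Set $\mathcal{H} := \mathrm{cl}\left(\set{R(T_\tau u)}{\tau\in D}\right)$, a compact subset of $(BC(\R^+,X),d_{\infty,+})$, and for $t\geq 0$ define $\phi_t:\mathcal{H}\to X$ by $\phi_t(h)=h(t)$; these are $1$-Lipschitz. For each $\tau\in D$ the orbit $\set{\phi_t(R(T_\tau u)) = u(\tau+t)}{t\geq 0}$ lies in the relatively compact set $u(\R)$, so by density of $\set{R(T_\tau u)}{\tau\in D}$ in $\mathcal{H}$ and continuity of the $\phi_t$, the set $\set{\phi_t(h)}{t\geq 0}$ is relatively compact in $X$ for every $h\in\mathcal{H}$. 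Arzel\`a--Ascoli then gives that $\set{\phi_t}{t\geq 0}$ is relatively compact in $(C(\mathcal{H},X),d_C)$, and the chain of equalities
\begin{equation*}
\sup_{h\in\mathcal{H}}d(\phi_{t_1}(h),\phi_{t_2}(h)) = \sup_{\tau\in D}d(u(\tau+t_1),u(\tau+t_2)) = \sup_{s\in\R}d(u(s+t_1),u(s+t_2)) = d_\infty(T_{t_1}u, T_{t_2}u)
\end{equation*}
(the middle step again using relative density of $D$ in $\R$ and uniform continuity of $u$ on compacta, or simply density of $\{\tau + \cdot ; \tau\in D\}$ together with continuity) yields $d_C(\phi_{t_1},\phi_{t_2}) = d_\infty(T_{t_1}u, T_{t_2}u)$, so Lemma~\ref{lem2}(i) transfers relative compactness of $\set{\phi_t}{t\geq 0}$ to relative compactness of $\{T_t u ; t\geq 0\}$ in $(BC(\R,X),d_\infty)$. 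But $\{T_t u ; t\geq 0\}$ is exactly the statement {\bf ii)} of Theorem~\ref{th2}, hence $u\in AP(\R,X)$; and once $u$ is almost periodic, Haraux's criterion gives {\bf ii)} for the original relatively dense $D\subseteq\R^+$.

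The main obstacle I anticipate is the bookkeeping around the two different hypotheses on $D$ in {\bf ii)} versus {\bf iii)} (relatively dense in $\R^+$ versus in $\R$) and making sure the ``$\sup$ over $D$ equals $\sup$ over $\R$'' step is genuinely justified: relative density of $D$ alone does not make $\{\tau + t : \tau\in D,\ t\geq 0\}$ literally equal to $\R$, so one must invoke continuity of $u$ (equivalently, the fact that $\mathrm{cl}(\{\tau + [0,\infty) : \tau\in D\}) = \R$ when $D$ is relatively dense in $\R$, together with the continuity of $s\mapsto d(u(s+t_1),u(s+t_2))$) to upgrade the supremum. Once that point is handled cleanly, the rest is a routine transcription of the Arzel\`a--Ascoli argument already used for Theorem~\ref{th2}, and the logical skeleton {\bf i)} $\Rightarrow$ {\bf iii)} $\Rightarrow$ {\bf ii)} $\Rightarrow$ {\bf i)} closes the equivalences.
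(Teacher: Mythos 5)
Your overall skeleton \textbf{i)} $\Longrightarrow$ \textbf{iii)} $\Longrightarrow$ \textbf{ii)} $\Longrightarrow$ \textbf{i)} is the same as the paper's, and your \textbf{i)} $\Longrightarrow$ \textbf{iii)} is fine, but the other two implications each contain a genuine error. For \textbf{ii)} $\Longrightarrow$ \textbf{i)}: a relatively dense subset of $\R^+$ is \emph{not} in general a relatively dense subset of $\R$. With the paper's definition, $D\subseteq\R$ is relatively dense when some $\ell>0$ satisfies $D\cap[\alpha,\alpha+\ell]\neq\emptyset$ for \emph{every} $\alpha\in\R$; the set $D=\omega\N$ (precisely the case needed for periodic dynamical systems) meets no interval $[\alpha,\alpha+\ell]$ with $\alpha<-\ell$, so Theorem~\ref{th3} does not apply verbatim. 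The repair is the reflection trick: since $d_\infty(T_{\tau_1}u,T_{\tau_2}u)=d_\infty(T_{-\tau_1}u,T_{-\tau_2}u)$, relative compactness of $\set{T_\tau u}{\tau\in D}$ transfers to $\set{T_\tau u}{\tau\in -D}$, and $D\cup(-D)$ \emph{is} relatively dense in $\R$, so Haraux's criterion applies to that set.

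For \textbf{iii)} $\Longrightarrow$ \textbf{ii)}: the middle equality $\sup_{\tau\in D}d(u(\tau+t_1),u(\tau+t_2))=\sup_{s\in\R}d(u(s+t_1),u(s+t_2))$ is false, and no appeal to continuity of $u$ can repair it, because $D$ is only relatively dense in $\R$, not dense. Concretely take $X=\R$, $u(s)=\sin(2\pi s)$, $D=\Z$, $t_1=0$, $t_2=1/2$: the left-hand side is $0$ while the right-hand side is $2$. So $d_C(\phi_{t_1},\phi_{t_2})$ only controls the supremum over $\tau\in D$, which is the wrong direction for Lemma~\ref{lem2}, and your transcription of the Arzel\`a--Ascoli argument stalls exactly at the point you flagged as "the main obstacle''. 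This is why the paper does not run the proof of Theorem~\ref{th2} with $D$ in place of $\R$: it first upgrades hypothesis \textbf{iii)} to relative compactness of the full family $\set{R(T_\tau u)}{\tau\in\R}$, by extracting uniform continuity of $u$ from equicontinuity of the compact family on $[0,2\ell]$, writing each real $t$ as $\tau+\sigma$ with $\tau\in D$ and $\sigma\in[0,\ell]$, and passing to convergent subsequences of $(\sigma_n)_n$ and $(R(T_{\tau_n}u))_n$ (the argument detailed in Corollary~\ref{cor1}); only then does it invoke Theorem~\ref{th2}. A smaller, repairable point: your identification of $u(\R)$ with evaluations of the family should use $t$ ranging over $[0,\ell]$ and joint continuity of $(t,h)\mapsto h(t)$ on $[0,\ell]\times\mathcal{H}$, since a union of compact images over all $t\geq0$ is not obviously relatively compact.
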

%*******************************************

%*******************************************
\begin{remark} 
%remark 4.2
Our main result, Theorem \ref{th2}  is obviously a particular case of Corollary \ref{prop2}. But to  present our results, it was easier to start with Theorem \ref{th2}. To prove Corollary \ref{prop2}, we use Haraux's criterion and Theorem \ref{th2}.
\end{remark}
%*******************************************

%*******************************************
\begin{proof}[Proof of Corollary \ref{prop2}]
{\bf i)} $\Longrightarrow$ {\bf iii)}. It is a consequence of Theorem \ref{th2}. 
\vskip 2 mm
{\bf iii)} $\Longrightarrow$ {\bf ii)}. 
To establish this implication, using Theorem \ref{th2}, it suffices to show that assertion iii) implies that $\{R(T_{\tau}u) ; \tau\in\R \}$ is relatively compact in $BC(\R^+,X)$.  The proof of this last implication is a slight adaptation of those of those of the Haraux's criterion given in \cite[Theorem 1]{Ha2}. A similar proof will be detailed in the following result as there will be technical issues. To demonstrate that $\{R(T_{\tau}u) ; \tau\in\R \}$ is relatively compact, it suffices in the proof of ii) $\Longrightarrow$ i) of Corollary \ref{cor1}
to take $\ell=0$ and  replace $\{T^+_{\tau}u ; \tau\in D \}$ by $\{R(T_{\tau}u) ; \tau\in D \}$.
\vskip 2 mm
{\bf ii)} $\Longrightarrow$ {\bf i)}. 
For $\tau_1$, $\tau_2\geq0$, $\ds d_{\infty}(T_{\tau_1}u,T_{\tau_2}u)= \sup_{t\in\R}d(u(\tau_1+t),u(\tau_2+t))$.
Replacing $t$ by $t-\tau_1-\tau_2$ in the upper bound, we get $\ds d_{\infty}(T_{\tau_1}u,T_{\tau_2}u) = d_{\infty}(T_{-\tau_1}u,T_{-\tau_2}u)$.
Then the set $\{T_{\tau}u ; \tau\in -D \}= \{T_{-\tau}u ; \tau\in D \}$ is relatively compact in $BC(\R,X)$ since $\{T_{\tau}u ; \tau\in D \}$ is also one. Therefore the set $\{T_{\tau}u ; \tau\in D\cup(-D) \}$ is relatively compact in $BC(\R,X)$. Moreover $D\cup(-D)$ is a relatively dense subset of $\R$.
According to Haraux's criterion, we have $u\in AP(\R,X)$.
\end{proof}
%*******************************************   

We extend Theorem \ref{cor2}, the  well-known characterization of asymptotically almost periodic functions. 
For $\tau\in\R^+$ and $u\in BC(\R^+,X)$, we define {\it the translation mapping} $T^+_{\tau}u\in BC(\R^+,X)$ by $T^+_{\tau}u(t) = u(t+\tau)$ for $t\geq0$.

%*******************************************
\begin{corollary} \label{cor1}
%corollary 4.3
Let $(X,d)$ be a complete metric space and  let 
$D$ be a relatively dense subset of $\R^+$.
 For $u\in BC(\R^+,X)$ the following statements are equivalent.
\vskip 2 mm
{\bf i)} $u\in AAP(\R^+,X)$.
\vskip 2 mm
{\bf ii)} The set $\{T^+_{\tau}u ; \tau\in D \}$ is relatively compact in $(BC(\R^+,X),d_{\infty,+})$.
\end{corollary}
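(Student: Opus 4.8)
The plan is to follow the same architecture as the proof of Corollary \ref{prop2}: reduce statement (ii) to the full relative compactness of $\{T^+_\tau u ; \tau \geq 0\}$ by exploiting the relative density of $D$, and then invoke Theorem \ref{cor2}. The implication (i) $\Longrightarrow$ (ii) is immediate: if $u \in AAP(\R^+,X)$ then by Theorem \ref{cor2} the larger set $\{T^+_\tau u ; \tau \geq 0\}$ is relatively compact, hence so is its subset $\{T^+_\tau u ; \tau \in D\}$. The work is entirely in (ii) $\Longrightarrow$ (i).

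For (ii) $\Longrightarrow$ (i), let $\ell > 0$ be a relative-density constant for $D$, so that $D \cap [\alpha, \alpha + \ell] \neq \emptyset$ for every $\alpha \geq 0$. Denote by $\mathcal{K}$ the closure of $\{T^+_\tau u ; \tau \in D\}$, a compact subset of $(BC(\R^+,X), d_{\infty,+})$ by hypothesis. The key point is that every translate $T^+_\sigma u$ with $\sigma \geq 0$ can be approximated, uniformly on a half-line shifted by at most $\ell$, by a translate coming from $D$: given $\sigma \geq \ell$, pick $\tau \in D \cap [\sigma - \ell, \sigma]$, so $\sigma - \tau =: s \in [0,\ell]$, and then $T^+_\sigma u(t) = T^+_\tau u(t + s) = (T^+_s \circ T^+_\tau u)(t)$. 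Thus $\{T^+_\sigma u ; \sigma \geq \ell\}$ is contained in $\{T^+_s h ; h \in \mathcal{K},\ s \in [0,\ell]\}$. I would then argue that this last set is relatively compact: the map $[0,\ell] \times \mathcal{K} \to BC(\R^+,X)$, $(s,h) \mapsto T^+_s h$, is continuous (uniform continuity of each $h$ near the relevant range, using that $\mathcal{K}$ is compact — equivalently invoke Arzel\`a--Ascoli to get equicontinuity of the family $\mathcal{K}$), and $[0,\ell] \times \mathcal{K}$ is compact, so its image is compact. Adding back the finitely-much-missing part $\{T^+_\sigma u ; 0 \leq \sigma \leq \ell\}$, which is itself the continuous image of the compact interval $[0,\ell]$ under $\sigma \mapsto T^+_\sigma u$ — here one needs that $\sigma \mapsto T^+_\sigma u$ is continuous into $(BC(\R^+,X), d_{\infty,+})$, which follows from $u$ being bounded and uniformly continuous on $\R^+$, a consequence of (ii) via equicontinuity of $\mathcal{K}$ at the point $\tau = \min D \cap [0,\ell]$ or simply of the boundedness of the orbit — one concludes that $\{T^+_\sigma u ; \sigma \geq 0\}$ is relatively compact in $(BC(\R^+,X), d_{\infty,+})$. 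Theorem \ref{cor2} then yields $u \in AAP(\R^+,X)$.

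I expect the main obstacle to be the continuity of the translation action $s \mapsto T^+_s h$, uniformly over $h \in \mathcal{K}$, which is exactly the "technical issue" flagged in the proof of Corollary \ref{prop2}: unlike translation on $\R$, translation on $\R^+$ is not an isometry and not invertible, so one cannot reduce to a group action. The clean way around this is to first extract from the compactness of $\mathcal{K}$, via Arzel\`a--Ascoli, that $\mathcal{K}$ is a uniformly equicontinuous family of functions on $\R^+$ with uniformly relatively compact ranges; equicontinuity of $\mathcal{K}$ gives, for each $\eps > 0$, a $\delta > 0$ with $d(h(t), h(t')) \leq \eps$ whenever $|t - t'| \leq \delta$, uniformly in $h \in \mathcal{K}$, and this delivers the needed estimate $d_{\infty,+}(T^+_s h, T^+_{s'} h) \leq \eps$ for $|s - s'| \leq \delta$, uniformly in $h$. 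That single equicontinuity fact underwrites both the continuity of $(s,h) \mapsto T^+_s h$ and the continuity of $\sigma \mapsto T^+_\sigma u$, after which the compactness bookkeeping is routine. A secondary routine point to handle carefully is that one is approximating $T^+_\sigma u$ by $T^+_s(T^+_\tau u)$ with $T^+_\tau u \in \mathcal{K}$ only in the closure sense, but since $s$ ranges in the fixed compact interval $[0,\ell]$ this causes no trouble once the joint continuity above is in hand.
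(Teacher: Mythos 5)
Your overall architecture matches the paper's: show that assumption ii) forces $\{T^+_\sigma u ;\ \sigma\geq 0\}$ to be relatively compact, splitting the orbit into $\{T^+_\sigma u ;\ \sigma\geq\ell\}$ (reached from $D$-translates by a further shift $s\in[0,\ell]$, which is exactly the paper's decomposition $t_n=\tau_n+\sigma_n$) and $\{T^+_\sigma u ;\ 0\leq\sigma\leq\ell\}$ (continuous image of a compact interval), then invoke Theorem \ref{cor2}. However, there is a genuine gap at the one step you yourself identify as the crux: you claim that compactness of $\mathcal{K}=\mathrm{cl}\{T^+_\tau u ;\ \tau\in D\}$ in $(BC(\R^+,X),d_{\infty,+})$ yields, \emph{via Arzel\`a--Ascoli}, uniform equicontinuity of $\mathcal{K}$ on all of $\R^+$. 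That implication is false in general: Arzel\`a--Ascoli characterizes compactness in $C(K,X)$ only for $K$ compact, and a compact subset of $BC(\R^+,X)$ need not be uniformly equicontinuous --- the singleton $\{h\}$ with $h(t)=\sin(t^2)$ is compact but $h$ is not uniformly continuous. Total boundedness of $\mathcal{K}$ does give equicontinuity at each point, hence uniform equicontinuity on each compact interval, but upgrading this to a modulus valid uniformly over all of $\R^+$ requires knowing that the functions in (a dense subset of) $\mathcal{K}$ are uniformly continuous; since those functions are translates of $u$, that is essentially equivalent to the uniform continuity of $u$ on a half-line, which is precisely the nontrivial fact to be proved. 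As written, your argument is circular at this point.

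The missing idea is the one the paper supplies in its first bullet: apply Arzel\`a--Ascoli only to the restrictions of $\{T^+_\tau u ;\ \tau\in D\}$ to the compact interval $[0,2\ell]$, obtaining $\delta>0$ with $\sup_{\tau\in D} d(u(s_1+\tau),u(s_2+\tau))\leq\eps$ for $s_1,s_2\in[0,2\ell]$, $|s_1-s_2|\leq\delta$; then, given $t_1,t_2\geq\ell$ with $|t_1-t_2|\leq\delta$, use the relative density of $D$ to pick $\tau\in D\cap[t_1-\ell,t_1]$ so that $t_1-\tau, t_2-\tau\in[0,2\ell]$, which yields $d(u(t_1),u(t_2))\leq\eps$ and hence uniform continuity of $u$ on $[\ell,+\infty)$ (and then on $\R^+$, by continuity on the compact $[0,2\ell]$). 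Once this is in place, every element of the dense set $\{T^+_\tau u ;\ \tau\in D\}$ inherits the modulus of continuity of $u$, $\mathcal{K}$ is indeed uniformly equicontinuous, and the rest of your compactness bookkeeping (joint continuity of $(s,h)\mapsto T^+_sh$ on $[0,\ell]\times\mathcal{K}$, continuity of $\sigma\mapsto T^+_\sigma u$, union of two relatively compact sets) goes through and reproduces the paper's proof. Note also that your fallback justifications for the continuity of $\sigma\mapsto T^+_\sigma u$ do not work: boundedness of the orbit gives nothing here, and $\min\bigl(D\cap[0,\ell]\bigr)$ need not exist since $D$ need not be closed.
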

%*******************************************

%*******************************************
\begin{remark}
%remark 4.4
To establish implication ii) $\Longrightarrow$  i), by using Theorem \ref{cor2}, it suffices to prove that assertion  ii) implies that $\{T^+_{\tau}u ; \tau\geq0 \}$ is relatively compact in $(BC(\R^+,X),d_{\infty,+})$. The proof of this last implication is an adaptation of those of the Haraux's criterion. But contrary to the proof of  implication  iii) $\Longrightarrow$  ii) in  Corollary \ref{prop2}, there are technical issues. These technical difficulties come from the fact that when $D$ is a relatively dense subset in $\R^+$, the sets $D$ and $[t-\ell,t]$ can be disjoint
for some $0\leq t\leq\ell$. For this reason we give the complete proof of this implication.
\end{remark}
%*******************************************

%*******************************************
\begin{proof}[Proof of Corollary \ref{cor1}]
{\bf i)} $\Longrightarrow$ {\bf ii)}. It is a consequence 
of Theorem \ref{cor2}. 
\vskip 2 mm
{\bf ii)} $\Longrightarrow$ {\bf i)}. We will prove that assumption  ii) implies $\{T^+_{\tau}u ; \tau\geq0 \}$ is relatively compact in $BC(\R^+,X)$, then we conclude by using Theorem \ref{cor2}. 
The subset $D$  being  relatively dense in $\R^+$,  there exists $\ell>0$ such that $D\cap [\alpha,\alpha+\ell]\not=\emptyset$ for all $\alpha\geq0$.
\vskip2mm
\textbullet\
{\it We prove that $u$ is uniformly continuous on $[\ell,+\infty)$.} 
Let us fix $\eps>0$. By assumption the set 
$\{T^+_{\tau}u ; \tau\in D \}$
 is  in particular relatively compact in
$C([0,2\ell],X)$, then it is uniformly equicontinuous on $[0,2\ell]$, that is there exists $\delta>0$ such that 
\begin{equation}
\label{eq2}
s_1, s_2\in [0,2l], \quad \abs{s_1-s_2}\leq\delta\implies\sup_{\tau\in D}d(u(s_1+\tau),u(s_2+\tau))\leq\eps .
\end{equation}
Let $t_1$, $t_2$ be two real numbers such that $t_1$, $t_2\geq\ell$ and $\abs{t_1-t_2}\leq\delta$. We can assume without loss of generality that $\ell\leq t_1\leq t_2\leq t_1 + \ell$.  We have $D\cap [t_1-\ell,t_1]\not=\emptyset$ since $t_1-\ell\geq0$, then there exists $\tau\in D$ such that 
$0 \leq t_1- \tau\leq t_2 -\tau \leq 2l$. Taking account  \eqref{eq2}, we deduce that 
$\ds d(u(t_1),u(t_2)) = d(u((t_1- \tau) + \tau),u((t_2- \tau) + \tau)) \leq\eps$.
Hence $u$ is uniformly continuous on $[\ell,+\infty)$. 
\vskip2mm
\textbullet\
{\it We prove that $\{T^+_{\tau}u ; \tau\geq\ell \}$ is relatively compact in $BC(\R^+,X)$
.} Let $(t_n)_n$ be a sequence of real numbers such that $t_n\geq\ell$. We have $D\cap [t_n-\ell,t_n]\not=\emptyset$ for each $n\in\N$, since $t_n-\ell\geq0$, then there exist $\tau_n\in D$ and $\sigma_n\in [0,l]$ such that $t_n=\tau_n+\sigma_n$.
By compactness of the sequences $(\sigma_n)_n$ in $[0,\ell]$ and $(T^+_{\tau_n}u)_n$ in  $BC(\R^+,X)$, it follows that
$\ds\lim_{n\to+\infty}\sigma_n=\sigma$  and  $\ds\lim_{n\to+\infty}\sup_{t\geq0}d(u(t+\tau_n),v(t))=0$  (up to a subsequence).
From the following inequality 
$$\sup_{t\geq0}d(u(t_n+t),v(\sigma +t)) \leq \sup_{t\geq0}d(u(\tau_n+\sigma_n+t),u(\tau_n + \sigma+t) + \sup_{t\geq0}d(u(\tau_n + t),v(t))$$
and the uniform continuity of $u$, we deduce that
$\ds \lim_{n\to+\infty}\sup_{t\geq0}d(u(t_n+t),v(\sigma +t))=0$.
Then $\{T^+_{\tau}u ; \tau\geq\ell \}$ is relatively compact in $BC(\R^+,X)$. 
\vskip2mm
\textbullet\
{\it We prove that $u\in AAP(\R^+,X)$.}  
The function $u$ is uniformly continuous on $\R^+$, since $u$ is continuous on $[0,\ell]$ and uniformly continuous on $[\ell,+\infty)$. Then the map $\hat{u}:\R^+\to BC(\R^+,X)$ defined by $\hat{u}(\tau)=T^+_{\tau}u$ for $\tau\geq0$ is continuous, consequently the  set $\{T^+_{\tau}u ; 0\leq\tau\leq\ell \}$ is relatively compact in $BC(\R^+,X)$.
The set $\{T^+_{\tau}u ; \tau\geq0 \}$ is relatively compact in $BC(\R^+,X)$ as the union of two relatively compact sets. According to Theorem \ref{cor2}, $u\in AAP(\R,X)$.
\end{proof}
%*******************************************   

Using corollaries \ref{prop2} and \ref{cor1}, we give a  proof which is direct and simpler of the following result which can be found in \cite{Ha1, Ha2, Ha3}. Before we recall some definitions on  process.
\vskip 1 mm
A \textit{ process} on a complete metric space $(X,d)$  according to Dafermos \cite{Da} is a two parameter family $U(t,\tau)$ of maps from $X$ into itself defined for
$(t,\tau)\in\R\times\R^+$ and such that {\bf i)} $U(t,0)x=x$ for all $(t,x) \in \R\times X$, 
{\bf ii)}  $U(t,\sigma+\tau) = U(t+\sigma,\tau) \circ U(t,\sigma)$ for all $(t,\sigma,\tau)\in\R\times\R^+\times\R^+$
 and {\bf iii)} the mapping $U(t,\cdot)x\in C([0,+\infty),X)$ for all $(t,x) \in \R\times X$.
\vskip 1 mm
For each $x\in X$, \textit{the positive trajectory starting of $x$} is the map $U(0,\cdot)x:\R^+\to X$.
A function $u:\R\to X$ is called \textit{a complete trajectory} if we have $u(t+\tau)=U(t,\tau)u(t)$ for all $(t,\tau)\in\R\times\R^+$. 
\vskip 1 mm
A process $U$ is said \textit{$\omega$-periodic} ($\omega>0$) if $U(t+\omega,\tau)=U(t,\tau)$ for all $(t,\tau)\in\R\times\R^+$.
\vskip 1 mm
A process $U$ is said \textit{bounded}  if we have for some $M\geq 1$ for all $(\tau,x_1,x_2)\in\R^+\times X\times X$ $d\left( U(0,\tau)x_1,U(0,\tau)x_2\right) \leq Md\left(x_1,x_2\right)$.

%*******************************************
\begin{corollary}\cite{Ha1, Ha2}, \cite[Th\'eor\`eme 6.4.6, p. 84]{Ha3}
%corollary 4.5
Let U be a $\omega$-periodic process on a complete metric space $(X,d)$. If $U$ is bounded, then
  the following statements hold.
\vskip 2 mm
{\bf i)} If $u$ is a complete trajectory of $U$ such that $u(-\omega\N)$ is relatively compact, then $u$ is almost periodic.
\vskip 2 mm
{\bf ii)} If $u$ is a positive trajectory of $U$ such that $u(\omega\N)$ is relatively compact, then $u$ is asymptotically almost periodic.
\end{corollary}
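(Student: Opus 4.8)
The plan is to deduce i) from Corollary~\ref{prop2} applied with the relatively dense set $D=\omega\Z\subseteq\R$, and ii) from Corollary~\ref{cor1} applied with the relatively dense set $D=\omega\N\subseteq\R^+$; in each case it then suffices to check that the corresponding family of translates is relatively compact. The common ingredient is a one-line consequence of $\omega$-periodicity: if $u$ is a complete (resp. positive) trajectory of $U$, then for every $t\geq0$ and every $n\in\Z$ (resp. $n\in\N$),
\begin{equation*}
u(t+n\omega)=U(n\omega,t)\,u(n\omega)=U(0,t)\,u(n\omega),
\end{equation*}
the first equality being the trajectory identity and the second the $\omega$-periodicity $U(n\omega,t)=U(0,t)$. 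Since $U$ is bounded with constant $M$, this yields, for such integers $n,n'$,
\begin{equation*}
\sup_{t\geq0}d(u(t+n\omega),u(t+n'\omega))\leq M\,d(u(n\omega),u(n'\omega)),
\end{equation*}
that is $d_{\infty,+}(R(T_{n\omega}u),R(T_{n'\omega}u))\leq M\,d(u(n\omega),u(n'\omega))$ in case i) and $d_{\infty,+}(T^+_{n\omega}u,T^+_{n'\omega}u)\leq M\,d(u(n\omega),u(n'\omega))$ in case ii). Applying Lemma~\ref{lem2} i) to the maps $n\mapsto R(T_{n\omega}u)$ (resp. $n\mapsto T^+_{n\omega}u$), valued in the complete space $BC(\R^+,X)$, and $n\mapsto u(n\omega)$, valued in $X$, it therefore suffices to prove that $\{u(n\omega);n\in\Z\}$ (resp. $\{u(n\omega);n\in\N\}$) is relatively compact in $X$.

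For ii) this is exactly the hypothesis that $u(\omega\N)$ is relatively compact, so $\{T^+_{n\omega}u;n\in\N\}$ is relatively compact in $BC(\R^+,X)$ and Corollary~\ref{cor1} gives $u\in AAP(\R^+,X)$. For i) the hypothesis gives relative compactness of $u(-\omega\N)$, so it remains to show that $u(\omega\N)$ is relatively compact as well; then $u(\omega\Z)=u(-\omega\N)\cup u(\omega\N)$ is relatively compact, $\{R(T_{n\omega}u);n\in\Z\}$ is relatively compact in $BC(\R^+,X)$, and Corollary~\ref{prop2} gives $u\in AP(\R,X)$. I expect this relative compactness of $u(\omega\N)$ for a complete trajectory to be the main obstacle: it is the only point where boundedness of $U$ is genuinely used (without it the forward iterates of the time-$\omega$ map need not be equi-Lipschitz, and $u(\omega\N)$ may fail to be relatively compact even when $u(-\omega\N)$ is).

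To establish it, I would set $V:=U(0,\omega)\in C(X,X)$ and $a_p:=u(-p\omega)$; by $\omega$-periodicity and the trajectory identity one has $V^{n}=U(0,n\omega)$ for all $n\geq0$ (so each $V^n$ is $M$-Lipschitz, uniformly in $n$), $V(a_p)=a_{p-1}$ for $p\geq1$, and $u(n\omega)=V^{n}u(0)=V^{n+p}(a_p)$ for all $n,p\geq0$. Let $K:=\mathrm{cl}(u(-\omega\N))$, a compact set, and choose a strictly increasing sequence $(p_j)$ with $a_{p_j}\to\xi\in K$. Since $V^{m}(a_p)=a_{p-m}\in K$ for $p\geq m$, continuity of $V^{m}$ gives $V^{m}(\xi)\in K$ for every $m\geq0$. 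Moreover $u(0)=V^{p_j}(a_{p_j})$ while $d(V^{p_j}(a_{p_j}),V^{p_j}(\xi))\leq M\,d(a_{p_j},\xi)\to0$ by the uniform Lipschitz bound, so $u(0)=\lim_j V^{p_j}(\xi)\in K$; applying the continuous map $V^{n}$ then gives $u(n\omega)=\lim_j V^{n+p_j}(\xi)\in K$ for every $n\geq0$. Hence $u(\omega\N)\subseteq K$ is relatively compact, which completes the argument. Everything else is the routine ``sampled-values'' reduction via Lemma~\ref{lem2} together with the periodicity identity.
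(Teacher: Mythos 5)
Your proof is correct and follows the same overall route as the paper: the estimate $\sup_{t\geq0}d(u(t+n\omega),u(t+n'\omega))\leq M\,d(u(n\omega),u(n'\omega))$ from $\omega$-periodicity and boundedness, Lemma \ref{lem2} to pass from relative compactness of the sampled values to relative compactness of the translates, and then Corollary \ref{prop2} with $D=\omega\Z$ (resp. Corollary \ref{cor1} with $D=\omega\N$). The only place you diverge is the step you rightly single out as the crux of part i), namely deducing relative compactness of $u(\omega\N)$ from that of $u(-\omega\N)$. Your argument via the time-$\omega$ map $V=U(0,\omega)$ and a limit point $\xi$ of $(u(-p\omega))_p$ is valid (and in fact proves the stronger inclusion $u(\omega\N)\subseteq\mathrm{cl}\left(u(-\omega\N)\right)$), but the paper obtains the same conclusion in one line: writing $u(n\omega)=U(0,(n+m)\omega)u(-m\omega)$ and $u(m\omega)=U(0,(n+m)\omega)u(-n\omega)$ with the \emph{same} operator $U(0,(n+m)\omega)$ acting on two backward samples, boundedness gives $d(u(n\omega),u(m\omega))\leq M\,d(u(-m\omega),u(-n\omega))$, and Lemma \ref{lem2} applied to $n\mapsto u(n\omega)$ versus $n\mapsto u(-n\omega)$ transfers the relative compactness directly, with no subsequence extraction. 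Both arguments are sound; the paper's is shorter and stays entirely within the Lipschitz-comparison framework of Lemma \ref{lem2}.
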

%*******************************************

%*******************************************
\begin{proof}
\vskip 2 mm
{\bf i)}  The process $U$ is $\omega$-periodic, then we have  $u(n\omega) = U(-m\omega,(n+m)\omega)u(-m\omega) = U(0,(n+m)\omega)u(-m\omega)$ for all $n$, $m\in\N$. 
 From the boundedness assumption on $U$, we deduce that    
 $\ds d\left(u(n\omega),u(m\omega)\right) \leq Md\left(u(-m\omega),u(-n\omega)\right)$, then 
 $u(\omega\N)$ is relatively compact since $u(-\omega\N)$ is also one, therefore $u(\omega\Z)$ is relatively compact.
 From assumptions on the process $U$, it follows that for all $n$, $m\in\Z$,
\begin{equation} \label{eq1}
\sup_{\tau\geq0}d\left(u(\tau+n\omega),u(\tau+m\omega)\right) \leq Md\left(u(n\omega),u(m\omega)\right) .
\end{equation}
From Lemma \ref{lem2},
 $\{R(T_{n\omega}u) ; n\in \Z \}$ is relatively compact in $(BC(\R^+,X),d_{\infty,+})$ since $u(\omega\Z)$ is also one in $(X,d)$. We conclude with Corollary \ref{prop2} by setting $D=\omega\Z$.
\vskip 2 mm
{\bf ii)} For all $n$, $m\in\N$, \eqref{eq1} holds on the positive trajectory $u$,
then from Lemma \ref{lem2},
 $\{T^+_{n\omega}u ; n\in\N \}$ is relatively compact in $(BC(\R^+,X),d_{\infty,+})$ since $u(\omega\N)$ is also one in $(X,d)$. We conclude with Corollary \ref{cor1} by setting $D=\omega\N$.
\end{proof}
%*******************************************   

%%%%%%%%%%%%%%%%%%%%%%%%%%%
\section{Bochner's criterion in the periodic case}
%Section 5
\label{iv}
%%%%%%%%%%%%%%%%%%%%%%%%%%%

Periodic functions are a special case of almost periodic functions. 
Haraux gave a characterization of  periodic functions in terms of Bochner's criterion which is recalled in Section \ref{iib}.
This criterion  is a direct consequence of \cite[Proposition 2]{Ha4}. Haraux established a general result \cite[Th\'eor\`eme 1]{Ha4} implying as a special case a characterization of periodic functions  and the fact 
that any compact trajectory of a one-parameter continuous group is automatically periodic. 
\vskip 2 mm
In this section, we give an extension of this characterization of periodic functions in the spirit of the main result of this article. We also treat the asymptotically periodic case. Then we apply these results to study the periodicity of solutions of  dynamical systems. 
\vskip 2 mm
Recall that we denote by $R$ the restriction operator $R:BC(\R,X)\to BC(\R^+,X)$ defined by $R(u)(t):=u(t)$ for $t\geq0$ and $u\in  BC(\R,X)$.

%*******************************************
\begin{corollary} \label{cor4}
%corollary 5.1
Let $(X,d)$ be a complete metric space. 
For $u\in BC(\R,X)$ the following statements are equivalent.
\vskip 2 mm
{\bf i)} The function $u$ is $\omega$-periodic ($\omega>0$).
\vskip 2 mm
{\bf ii)} The set $\{T_{\tau}u ; \tau\geq0 \}$ is a compact set of $(BC(\R,X),d_{\infty})$.
\vskip 2 mm
{\bf iii)} The set $\{R(T_{\tau}u) ; \tau\in\R \}$ is a compact set of $(BC(\R^+,X),d_{\infty,+})$. 
\end{corollary}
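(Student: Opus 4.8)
The plan is to prove the two equivalences (i)$\Longleftrightarrow$(ii) and (i)$\Longleftrightarrow$(iii), using Theorem \ref{th1} (Haraux's compactness characterization of periodicity), Theorem \ref{th2}, and Lemma \ref{lem2}. The only new ingredient needed is the following uniqueness property of almost periodic functions, which I will record and prove separately: \emph{if $v,w\in AP(\R,X)$ coincide on $\R^{+}$, then $v=w$.} Everything else reduces to routine manipulations of the sup-distances together with the results already available.

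For the easy directions: (i)$\Longrightarrow$(ii), if $u$ is $\omega$-periodic then writing $\tau=k\omega+r$ with $k\in\N$ and $r\in[0,\omega]$ gives $T_{\tau}u=T_{r}u$, so $\{T_{\tau}u;\tau\geq0\}=\{T_{r}u;r\in[0,\omega]\}$; a continuous $\omega$-periodic function is uniformly continuous, hence $r\mapsto T_{r}u$ is continuous from the compact interval $[0,\omega]$ into $(BC(\R,X),d_{\infty})$ and its image is compact. For (ii)$\Longrightarrow$(i), the change of variable $t\mapsto t-\tau_{1}-\tau_{2}$ already used in the proof of Theorem \ref{th2} gives $d_{\infty}(T_{\tau_{1}}u,T_{\tau_{2}}u)=d_{\infty}(T_{-\tau_{1}}u,T_{-\tau_{2}}u)$, so Lemma \ref{lem2}(ii) shows $\{T_{-\tau}u;\tau\geq0\}=\{T_{\tau}u;\tau\leq0\}$ is compact whenever $\{T_{\tau}u;\tau\geq0\}$ is; then $\{T_{\tau}u;\tau\in\R\}$ is compact as a union of two compact sets and Theorem \ref{th1} yields that $u$ is periodic. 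The implication (i)$\Longrightarrow$(iii) is handled the same way: reduce to $r\in[0,\omega]$ and use continuity of $r\mapsto R(T_{r}u)$, or invoke Theorem \ref{th1} together with Lemma \ref{lem2}(ii) applied to the $1$-Lipschitz estimate $d_{\infty,+}(R(T_{\tau_{1}}u),R(T_{\tau_{2}}u))\leq d_{\infty}(T_{\tau_{1}}u,T_{\tau_{2}}u)$.

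The substantive step is (iii)$\Longrightarrow$(i). From (iii), Theorem \ref{th2} gives $u\in AP(\R,X)$, so by Bochner's criterion the hull $\mathcal{H}:=\text{cl}\{T_{\tau}u;\tau\in\R\}$ is a compact subset of $(BC(\R,X),d_{\infty})$; moreover $\mathcal{H}\subset AP(\R,X)$ since $AP(\R,X)$ is closed. The uniqueness property shows that $R$ restricted to $\mathcal{H}$ is injective, because two elements of $\mathcal{H}$ agreeing on $\R^{+}$ are equal; hence $R|_{\mathcal{H}}$ is a continuous bijection from the compact space $\mathcal{H}$ onto its image and therefore a homeomorphism. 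Now $\{R(T_{\tau}u);\tau\in\R\}$ is by (iii) compact, hence a closed dense subset of $R(\mathcal{H})$, so it equals $R(\mathcal{H})$; pulling back through the homeomorphism, $\{T_{\tau}u;\tau\in\R\}=\mathcal{H}$ is compact, and Theorem \ref{th1} gives that $u$ is periodic.

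The hard point, as expected, is the uniqueness property. Given $v,w\in AP(\R,X)$ with $v=w$ on $\R^{+}$, I would first check that the pair $F=(v,w)$, viewed as a function $\R\to X\times X$ with a product metric, is almost periodic: for any real sequence, Bochner's criterion applied to $v$ extracts a subsequence along which $T_{\cdot}v$ converges uniformly, and Bochner's criterion applied to $w$ extracts a further subsequence along which $T_{\cdot}w$ also converges uniformly, so $T_{\cdot}F$ converges uniformly along it. Hence $F$ is uniformly recurrent, i.e.\ there exist $\sigma_{n}\to+\infty$ with $\sup_{t\in\R}d(v(t+\sigma_{n}),v(t))\to0$ and $\sup_{t\in\R}d(w(t+\sigma_{n}),w(t))\to0$. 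Fixing $t_{0}\in\R$, for $n$ large we have $t_{0}+\sigma_{n}\geq0$, so $v(t_{0}+\sigma_{n})=w(t_{0}+\sigma_{n})$; letting $n\to+\infty$ and using the two uniform limits gives $v(t_{0})=w(t_{0})$, whence $v=w$. This completes the scheme.
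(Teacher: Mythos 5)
Your proof is correct, and all four implications are established. The easy directions (i)$\Rightarrow$(ii), (ii)$\Rightarrow$(i) and (i)$\Rightarrow$(iii) coincide with the paper's arguments (reduction to $[0,\omega]$ by periodicity and uniform continuity; the change of variable $t\mapsto t-\tau_1-\tau_2$ plus Lemma \ref{lem2} and Theorem \ref{th1}; continuity of $R$). For the substantive implication (iii)$\Rightarrow$(i), both you and the paper rest on the same key fact — an almost periodic object determined on $\R^{+}$ is determined on all of $\R$, proved via uniform recurrence — but the packaging differs in two respects. First, the paper proves closedness of $\{T_{\tau}u;\tau\in\R\}$ sequentially: given $T_{\tau_n}u\to v$, hypothesis (iii) forces $R(v)=R(T_{\tau}u)$ for some $\tau$, and then the scalar function $\phi(t)=d(T_{\tau}u(t),v(t))$, which is almost periodic by a cited property of $AP$ functions and vanishes on $\R^{+}$, is shown to vanish everywhere by its uniform recurrence. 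You instead prove the uniqueness statement for pairs by showing $(v,w)$ is almost periodic with values in $X\times X$ via diagonal extraction in Bochner's criterion, which yields a single recurrence sequence working for both components simultaneously; this is a clean, self-contained substitute for the citation that $t\mapsto d(f(t),g(t))$ is almost periodic. Second, rather than arguing sequentially, you upgrade the uniqueness statement to injectivity of $R$ on the compact hull $\mathcal{H}$, conclude that $R|_{\mathcal{H}}$ is a homeomorphism onto its image, identify $\{R(T_{\tau}u);\tau\in\R\}=R(\mathcal{H})$ by compactness and density, and pull compactness back. This topological reorganization is slightly more abstract but equally rigorous, and arguably makes clearer why hypothesis (iii) transfers compactness from the half-line restriction to the full-line hull.
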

%*******************************************

%*******************************************
\begin{proof}
{\bf i)} $\Longrightarrow$ {\bf ii)}.
From assumption, it follows that the function $\tau\to T_{\tau}u$ from $\R$ into $BC(\R,X)$  is  continuous and $\omega$-periodic.
 Then the
$\{T_{\tau}u ; \tau\geq0 \}=\{T_{\tau}u ; 0\leq\tau\leq\omega \}$ is a compact set of $(BC(\R,X),d_{\infty})$ as the range of a compact set by a continuous  map.
\vskip 2 mm
{\bf ii)} $\Longrightarrow$ {\bf i)}. 
For $\tau_1$, $\tau_2\in\R$, $\ds d_{\infty}(T_{\tau_1}u,T_{\tau_2}u):= \sup_{t\in\R}d(u(\tau_1+t),u(\tau_2+t))$, we get $\ds d_{\infty}(T_{\tau_1}u,T_{\tau_2}u) = d_{\infty}(T_{-\tau_1}u,T_{-\tau_2}u)$. Then the set $\{T_{\tau}u ; \tau\leq0 \}= \{T_{-\tau}u ; \tau\geq0 \}$ is compact in $BC(\R,X)$ since $\{T_{\tau}u ; \tau\geq0 \}$ is also one. Therefore the set $\{T_{\tau}u ; \tau\in\R \}$ is a compact set of $BC(\R,X)$ as the union of two compact sets in $BC(\R,X)$. According to Theorem \ref{th1}, $u$ is periodic.
\vskip 2 mm
{\bf i)} $\Longrightarrow$ {\bf iii)}. 
It is obvious by using Theorem \ref{th1} and the continuity of the restriction operator $R$.
\vskip 2 mm
{\bf iii)} $\Longrightarrow$ {\bf i)}. 
By using Theorem \ref{th1}, we have to prove that $\mathcal{K}:=\{T_{\tau}u ; \tau\in\R \}$ is a compact set of $(BC(\R,X),d_{\infty})$. 
As consequence of Theorem \ref{th2} and Bohner's criterion,
the set $\mathcal{K}$  is relatively compact in $(BC(\R,X),d_{\infty})$ and the function $u$ is almost periodic.
It remains to prove that $\mathcal{K}$ is closed in $(BC(\R,X),d_{\infty})$. 
Let $(\tau_n)_n$ be a sequence of real numbers such that $\ds\lim_{n\to+\infty}d_{\infty}(T_{\tau_n}u,v)=0$. Let us prove that $v=T_{\tau}u$ for some $\tau\in\R$.
By continuity of the operator $R$, we have  $\ds\lim_{n\to+\infty}d_{\infty,+}(R(T_{\tau_n}u),R(v))=0$. 
By assumption, the set $\{R(T_{\tau}u) ; \tau\in\R \}$ is in particular closed in $(BC(\R^+,X),d_{\infty,+})$, then $R(v)=R(T_{\tau}u)$ for some $\tau\in\R$, that is 
\begin{equation}
\label{eq12}
\forall t\geq0 , \qquad v(t)=T_{\tau}u(t)  .
\end{equation}
We have to prove that \eqref{eq12} holds on the whole real line.
The function $T_{\tau}u$ is almost periodic as translation of an almost periodic function and $v$ is also one as uniform limit on $\R$ of almost periodic functions.
Let us denote by $\phi:\R\to\R$ the function defined by $\phi(t):=d(T_{\tau}u(t),v(t))$. 
The function $\phi$ is almost periodic \cite[Property 4, p. 3 \& 7, p.6]{LZ}.
An almost periodic function is uniformly recurrent, then there exists a sequence of real numbers 
such that 
$\ds\lim_{n\to+\infty}\tau_n=+\infty$  and $\ds\lim_{n\to+\infty}\phi(t+\tau_n)=\phi(t)$ for all  $t\in\R$.
From \eqref{eq12}, it follows  $\phi(t)=0$ for all $t\geq0$, so we deduce that  $\ds\phi(t)=\lim_{n\to+\infty}\phi(t+\tau_n)=0$ for all $t\in\R$. Then $v(t)=T_{\tau}u(t)$ for all $t\in\R$.
This ends the proof.
\end{proof}
%*******************************************   

According Theorem \ref{cor2}, if the set $\{T^+_{\tau}u ; \tau\geq0 \}$ is relatively compact in $BC(\R^+,X)$, then the function u is asymptotically almost periodic. 
We now give an answer to the question what can be said about the function $u$  when $\{T^+_{\tau}u ; \tau\geq0 \}$ is a compact set of $BC(\R^+,X)$.
For $u\in BC(\R^+,X)$, we say that $u$ is \textit{$\omega$-periodic ($\omega>0$) on $[t_0,+\infty)$ for some $t_0\geq0$} if $u(t+\omega)=u(t)$ for all $t\geq t_0$.

%*******************************************
\begin{corollary}
\label{cor5}
%corollary 5.2
Let $(X,d)$ be a complete metric space. 
For $u\in BC(\R^+,X)$ the following statements are equivalent.
\vskip 2 mm
{\bf i)} There exists $t_0\geq0$ such that $u$ is $\omega$-periodic on $[t_0,+\infty)$. 
\vskip 2 mm
{\bf ii)} The set $\{T^+_{\tau}u ; \tau\geq0 \}$ is a compact set of  $(BC(\R^+,X),d_{\infty,+})$.
\end{corollary}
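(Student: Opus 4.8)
The plan is to treat the two implications separately. The implication \textbf{i)} $\Longrightarrow$ \textbf{ii)} is elementary: if $u$ is $\omega$-periodic on $[t_0,+\infty)$, then $u$ is uniformly continuous on $\R^+$ (it is continuous on the compact set $[0,t_0+\omega]$ and, being continuous and $\omega$-periodic on $[t_0,+\infty)$, uniformly continuous there as well), so $\tau\mapsto T^+_{\tau}u$ is continuous from $\R^+$ into $BC(\R^+,X)$. Moreover, for $\tau\geq t_0$ and $t\geq0$ one has $T^+_{\tau+\omega}u(t)=u(t+\tau+\omega)=u(t+\tau)=T^+_{\tau}u(t)$ since $t+\tau\geq t_0$; hence $\{T^+_{\tau}u ; \tau\geq t_0 \}=\{T^+_{\tau}u ; t_0\leq\tau\leq t_0+\omega \}$ is the continuous image of a compact interval, hence compact, and likewise $\{T^+_{\tau}u ; 0\leq\tau\leq t_0 \}$ is compact. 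Therefore $\{T^+_{\tau}u ; \tau\geq0 \}$ is a union of two compact sets, hence compact.

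For \textbf{ii)} $\Longrightarrow$ \textbf{i)}, set $\mathcal{K}:=\{T^+_{\tau}u ; \tau\geq0 \}$. By Theorem~\ref{cor2}, $u\in AAP(\R^+,X)$, so there is $v\in AP(\R,X)$ with $d(u(t),v(t))\to0$ as $t\to+\infty$. The first step is to produce a genuine almost periodic companion of $u$ on a half-line. Choosing any $\tau_n\to+\infty$, compactness of $\mathcal{K}$ gives, along a subsequence, $T^+_{\tau_n}u\to T^+_{\sigma}u$ for some $\sigma\geq0$; since $d_{\infty,+}(T^+_{\tau_n}u,R(T_{\tau_n}v))\to0$ (because $\tau_n\to+\infty$) and $v\in AP(\R,X)$, Bochner's criterion yields, along a further subsequence, $T_{\tau_n}v\to\bar g$ with $\bar g\in AP(\R,X)$; applying $R$ gives $R\bar g=T^+_{\sigma}u$, so with $g:=T_{-\sigma}\bar g\in AP(\R,X)$ we obtain $u(t)=g(t)$ for all $t\geq\sigma$. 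Consequently $T^+_{\tau}u=R(T_{\tau}g)$ for every $\tau\geq\sigma$, and thus $\{R(T_{\tau}g) ; \tau\geq\sigma \}\subseteq\mathcal{K}$. Granting that $\{T_{\tau}g ; \tau\in\R \}$ is closed in $(BC(\R,X),d_{\infty})$, it coincides with its closure, which is compact by Bochner's criterion, so Theorem~\ref{th1} gives that $g$ is $\omega$-periodic for some $\omega>0$; then $u(t+\omega)=g(t+\omega)=g(t)=u(t)$ for all $t\geq\sigma$, i.e. i) holds with $t_0=\sigma$.

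It remains to prove the closedness of $\{T_{\tau}g ; \tau\in\R \}$. Let $h$ lie in its closure. Because $g$ is uniformly recurrent, there are arbitrarily large $s>0$ with $d_{\infty}(T_{s}g,g)$ as small as we wish, and combining $h=\lim_m T_{\mu_m}g$ with such shifts (note $d_{\infty}(T_{\mu+s}g,T_{\mu}g)=d_{\infty}(T_{s}g,g)$) one sees that in fact $h=\lim_n T_{\rho_n}g$ for a suitable sequence $\rho_n\to+\infty$. For $n$ large, $\rho_n\geq\sigma$, so $R(T_{\rho_n}g)=T^+_{\rho_n}u\in\mathcal{K}$; passing to the limit, using the continuity of $R$ and the fact that $\mathcal{K}$ is \emph{closed}, we get $Rh=T^+_{\rho}u$ for some $\rho\geq0$. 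Hence $h(t)=u(t+\rho)=g(t+\rho)=T_{\rho}g(t)$ for all $t\geq\max(0,\sigma-\rho)$, so $h$ and $T_{\rho}g$ are two almost periodic functions that agree on a half-line. As in the proof of Corollary~\ref{cor4}, the function $\phi(t):=d(h(t),T_{\rho}g(t))$ is almost periodic, hence uniformly recurrent; since $\phi$ vanishes on a half-line it vanishes on all of $\R$, so $h=T_{\rho}g\in\{T_{\tau}g ; \tau\in\R \}$.

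The step I expect to be the main obstacle is precisely this last one: turning the compactness of $\mathcal{K}$ — as opposed to the mere relative compactness available in Corollary~\ref{cor1} — into the closedness of the translation orbit of $g$. The two ingredients are that every element of the hull of the almost periodic function $g$ is a limit of translates $T_{\rho_n}g$ with $\rho_n\to+\infty$ (which rests on uniform recurrence), and that such a limit, via $R$ and the closedness of $\mathcal{K}$, is forced back onto some $T^+_{\rho}u\in\mathcal{K}$, after which the ``equal on a half-line implies equal'' lemma (already used for Corollary~\ref{cor4}) finishes the job. Everything else is routine bookkeeping with translation operators together with Theorems~\ref{cor2} and~\ref{th1}.
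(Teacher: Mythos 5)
Your proof is correct, but the argument for \textbf{ii)} $\Longrightarrow$ \textbf{i)} is organized quite differently from the paper's. The paper takes the asymptotic companion $v\in AP(\R,X)$ of $u$, uses its uniform recurrence to get $u(t+t_n)\to v(t)$ pointwise with $t_n\to+\infty$, deduces the inequality $\sup_{t\in\R}d(v(t+\tau_1),v(t+\tau_2))\leq\sup_{s\geq0}d(u(s+\tau_1),u(s+\tau_2))$, and then transfers the compactness of $\{T^+_{\tau}u;\tau\geq0\}$ to $\{T_{\tau}v;\tau\geq0\}$ via Lemma \ref{lem2}, so that the already-proved Corollary \ref{cor4} gives periodicity of $v$; only at the very end does it use compactness of $\mathcal{K}$ again to identify $R(v)$ with some $T^+_{t_0}u$. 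You invert this order: you first use compactness of $\mathcal{K}$ together with Bochner's criterion for $v$ to manufacture an almost periodic $g$ that coincides \emph{exactly} with $u$ on a half-line, and you then prove periodicity of $g$ by showing its full-line translation orbit is closed (hence compact) and invoking Theorem \ref{th1} directly. Your closedness step is essentially a re-run of the paper's proof of the implication iii) $\Longrightarrow$ i) of Corollary \ref{cor4} — uniform recurrence to push the approximating translates to $+\infty$, closedness of $\mathcal{K}$ to land back on some $T^+_{\rho}u$, and the ``two almost periodic functions equal on a half-line are equal everywhere'' argument via the almost periodic scalar function $\phi(t)=d(h(t),T_{\rho}g(t))$ — so you pay for not reusing Corollary \ref{cor4} by redoing that work; the paper's route is shorter because the distance-comparison trick of Lemma \ref{lem2} lets it quote Corollary \ref{cor4} wholesale. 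Your version of \textbf{i)} $\Longrightarrow$ \textbf{ii)} is the same decomposition into $\{T^+_{\tau}u;0\leq\tau\leq t_0\}$ and $\{T^+_{\tau}u;\tau\geq t_0\}$, only slightly more elementary in that you exhibit the second piece as a continuous image of $[t_0,t_0+\omega]$ directly instead of passing through Corollary \ref{cor4} applied to the periodic extension. All steps check out; there is no gap.
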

%*******************************************

%*******************************************
\begin{remark} \label{rq1}
%remark 5.3
Let $u$ be a function which satisfies condition i) of Corollary \ref{cor5}. 
\vskip 2 mm
{\bf i)}
Let us denote by $v\in C(\R,X)$ the  
$\omega$-periodic function satisfying $u(t)=v(t)$ for $t\geq t_0$. A such function $v$ exists and is unique, $v$ is defined by 
$v(t)=u(t-[\frac{t-t_0}{\omega}]\omega)$ where $[\frac{t-t_0}{\omega}]$ denotes the integer part of $\frac{t-t_0}{\omega}$.
\vskip 2 mm
{\bf ii)}
The function $u$  is a special case of asymptotic almost periodic function where the almost periodic function  $v$ is periodic and $d(u(t),v(t))=0$ for $t\geq t_0$.
\end{remark}
%******************************************* 

%*******************************************
\begin{proof}[Proof of Corollary \ref{cor5}] 
{\bf i)} $\Longrightarrow$ {\bf ii)}.   
Let us denote by $v$ the function  defined in Remark \ref{rq1}. 
 By Corollary \ref{cor4} and the periodicity of $v$, we have $\{R(T_{\tau}v) ; \tau\geq  t_0 \}=\{R(T_{\tau}v) ; \tau\in\R \}$
is a compact set of $(BC(\R^+,X),d_{\infty,+})$. 
\vskip 2 mm
First, we have $T^+_{\tau}u=R(T_{\tau}v)$ for $\tau\geq t_0$, then
$\{T^+_{\tau}u ; \tau\geq  t_0 \}$
is a compact set.
\vskip 2 mm
Second, the function $u$ is uniformly continuous on $\R^+$, then the function from $\R^+$ to $BC(\R^+,X)$ defined by $\tau\to T^+_{\tau}u$ is continuous. Then the set 
$\{T^+_{\tau}u ; 0\leq\tau\leq  t_0 \}$ is compact.
\vskip 2 mm
Therefore the set $\{T^+_{\tau}u ; \tau\geq0 \}$ is a compact set of $BC(\R^+,X)$ as the union of two compact set.
\vskip 2 mm
{\bf ii)} $\Longrightarrow$ {\bf i)}. 
As consequence of Theorem \ref{cor2}, the function $u$ is asymptotically almost periodic, that is $\ds\lim_{t\to\infty}d(u(t),v(t))=0$ for some $v\in AP(\R,X)$.  
An almost periodic function is uniformly recurrent, then there exists a sequence of real numbers $(t_n)_n$ such that 
$\ds\lim_{n\to+\infty}t_n=+\infty$ and  $\ds\lim_{n\to+\infty}v(t+t_n)=v(t)$ for all  $t\in\R$.
We deduce that 
\begin{equation}
\label{eq9}
\forall t\in\R , \qquad \lim_{n\to+\infty}u(t+t_n)=v(t) . 
\end{equation}
\vskip 2 mm
{\it First we prove that $v$ is periodic.}
For  $t\in\R$, $\tau_1$, $\tau_2\geq0$, we have for $n$ enough large
$\ds d(u(t+t_n+\tau_1),u(t+t_n+\tau_2)) \leq \sup_{s\geq0}d(u(s+\tau_1),u(s+\tau_2))$.
From \eqref{eq9}, it follows that $\ds \sup_{t\in\R}d(v(t+\tau_1),v(t+\tau_2)) \leq  \sup_{s\geq0}d(u(s+\tau_1),u(s+\tau_2))$ for each $\tau_1$ and $\tau_2\geq0$. According to Lemma \ref{lem2},
 $\set{T_{\tau}v}{\tau\geq0}$ is a  compact set of $(BC(\R,X),d_{\infty})$
 since $\set{T^+_{\tau}u}{\tau\geq0}$ is also one in $(BC(\R^+,X),d_{\infty,+})$.
As consequence of Corollary \ref{cor4}, the function $v$ is periodic.
\vskip 2 mm
{\it Second we prove that: $\exists t_0\geq0$ such that $\forall t\geq0$, $v(t)=u(t+t_0)$.}
By compactness of $\{T^+_{\tau}u ; \tau\geq0 \}$,
there exists a subsequence $(T_{t_{\phi(n})}^+u)_n$  such that $\ds\lim_{n\to+\infty}d_{\infty,+}(T_{t_{\phi(n})}^+u,T_{t_0}^+u)=0$ for some $t_0\geq0$.
From \eqref{eq9} we deduce that $R(v)=T_{t_0}^+u$, that is $v(t)=u(t+t_0)$ for all $t\geq0$. 
\vskip 2 mm
Then $u(t)=v(t-t_0)$ for each $t\geq t_0$ where the function $v(\cdot-t_0)$ is periodic on $\R$.
\end{proof}
%*******************************************   

Now we give an example of application on dynamical systems of Corollary of \ref{cor4} and Corollary of \ref{cor5}.  For the definition of a dynamical system, see above Corollary \ref{cor3} in Section \ref{ii}.

%*******************************************
\begin{corollary} 
\label{cor7}
%corollary 5.4
Let $\left( S(t) \right)_{t\geq0}$ be a dynamical system  on a complete metric space $(X,d)$.
\vskip 2 mm
{\bf i)} If $u$ is a positive trajectory, then $u$ is periodic on $[t_0,+\infty)$ for some $t_0\geq0$ if and only if 
$u(\R^+)$ is a compact set and $\left( S(t) \right)_{t\geq0}$ is  equicontinuous on $u(\R^+)$.
\vskip 2 mm
{\bf ii)} If $u$ is a complete trajectory, then $u$ is periodic if and only if 
$u(\R)$ is a compact set and $\left( S(t) \right)_{t\geq0}$ is  equicontinuous on $u(\R)$.
\vskip 2 mm
{\bf iii)}
There exists a complete trajectory  which is periodic
if and only if there exists a positive trajectory $u$  such that $u(\R^+)$ is a compact set and $\left( S(t) \right)_{t\geq0}$ is  equicontinuous on $u(\R^+)$.
\end{corollary}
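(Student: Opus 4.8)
The plan is to deduce each equivalence in Corollary~\ref{cor7} from the function-theoretic criteria already proved---Corollary~\ref{cor4} for a periodic complete trajectory, Corollary~\ref{cor5} for a positive trajectory periodic on a half-line, and Corollaries~\ref{cor3},~\ref{cor6} (resp.\ Theorems~\ref{th2},~\ref{cor2}) for the (asymptotic) almost periodicity---using exactly the bridge employed in the proofs of Corollaries~\ref{cor3} and~\ref{cor6}. Write $K:=u(\R)$ in parts~ii) and~iii), and $K:=u(\R^+)$ in part~i). In every case $u$ is a trajectory, so $S(\tau)K\subseteq K$ for $\tau\geq0$, and for $\tau_1,\tau_2\geq0$ one has the identity
\[
\sup_{x\in K}d\big(S(\tau_1)x,S(\tau_2)x\big)=\sup_{t}d\big(u(t+\tau_1),u(t+\tau_2)\big),
\]
the left-hand side being a supremum of a continuous function of $x$ over $K$ and $\{u(t)\}$ being dense in $K$. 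Since this is an \emph{equality}, Lemma~\ref{lem2} applies in both directions and shows that $\{S(\tau)|_K;\tau\geq0\}$ is (relatively) compact in $(C(K,X),d_C)$ if and only if $\{T_\tau u;\tau\geq0\}$ (resp.\ $\{T^+_\tau u;\tau\geq0\}$) is (relatively) compact in $BC(\R,X)$ (resp.\ $BC(\R^+,X)$); and by Arzel\`a--Ascoli's theorem, equicontinuity of $\left(S(t)\right)_{t\geq0}$ on $K$ is precisely relative compactness of $\{S(\tau)|_K;\tau\geq0\}$ in $C(K,X)$, just as in the proof of Corollary~\ref{cor3}.

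For the implications from periodicity I would argue as follows. If $u$ is a periodic complete trajectory, then $u(\R)=u([0,\omega])$ is compact and $\tau\mapsto T_\tau u$ is continuous and periodic, so $\{T_\tau u;\tau\geq0\}$ is compact by Corollary~\ref{cor4}; transporting this compactness through the displayed identity and Lemma~\ref{lem2}, $\{S(\tau)|_K;\tau\geq0\}$ is compact, hence relatively compact, hence $\left(S(t)\right)_{t\geq0}$ is equicontinuous on $u(\R)$. The same reasoning with Corollary~\ref{cor5} gives~i), using $u(\R^+)=u([0,t_0+\omega])$ for compactness of the range. For~iii): restricting a periodic complete trajectory to $\R^+$ yields a positive trajectory whose range is compact and on which $\left(S(t)\right)_{t\geq0}$ is equicontinuous, by~ii); conversely, granting~i), if $u$ is a positive trajectory that is $\omega$-periodic on $[t_0,+\infty)$, then its periodic extension $v$ of Remark~\ref{rq1} is a complete trajectory---to verify $v(t+\tau)=S(\tau)v(t)$, choose $k\in\N$ with $t+k\omega\geq t_0$, so that $v(t)=u(t+k\omega)$, $v(t+\tau)=u(t+\tau+k\omega)=S(\tau)u(t+k\omega)=S(\tau)v(t)$---and $v$ is periodic; this proves~iii).

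The real content, and the step I expect to be the main obstacle, is the converse implication in~i) and~ii): upgrading the \emph{relative} compactness that Arzel\`a--Ascoli supplies to genuine \emph{compactness}, which is where the closedness of $K$ (not merely its relative compactness) is used. In~ii), assume $K=u(\R)$ compact and $\left(S(t)\right)_{t\geq0}$ equicontinuous on $K$. By Corollary~\ref{cor3}, $u\in AP(\R,X)$, and by Theorem~\ref{th2}(iii) the set $\{R(T_\tau u);\tau\in\R\}$ is relatively compact in $BC(\R^+,X)$, so it remains to show this set is closed. Let $R(T_{\tau_n}u)\to w$ in $BC(\R^+,X)$. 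If $(\tau_n)$ has a bounded subsequence, then $w=R(T_a u)$ for a limit point $a$, by continuity of $\tau\mapsto R(T_\tau u)$ ($u$ being uniformly continuous). Otherwise $\abs{\tau_n}\to+\infty$; then $w(0)=\lim_n u(\tau_n)$ lies in $u(\R)=K$ because $K$ is closed, say $w(0)=u(a)$, and for each $s\geq0$
\[
w(s)=\lim_n u(s+\tau_n)=\lim_n S(s)u(\tau_n)=S(s)u(a)=u(a+s),
\]
by the complete-trajectory relation and continuity of $S(s)$, so $w=R(T_a u)$. Hence $\{R(T_\tau u);\tau\in\R\}$ is compact and Corollary~\ref{cor4} gives that $u$ is periodic.

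Part~i) is proved by the identical scheme, replacing $\R$ by $\R^+$, Corollary~\ref{cor3} by Corollary~\ref{cor6}, Theorem~\ref{th2}(iii) by Theorem~\ref{cor2}(ii), the complete-trajectory relation by $u(r+s)=S(s)u(r)$ ($r,s\geq0$), the family $\{R(T_\tau u);\tau\in\R\}$ by $\{T^+_\tau u;\tau\geq0\}$, and Corollary~\ref{cor4} by Corollary~\ref{cor5}; now $\tau_n\geq0$, and $w(0)=\lim_n u(\tau_n)\in\overline{u(\R^+)}=u(\R^+)$ forces $w(0)=u(a)$ with $a\geq0$ and $w=T^+_a u$, so $u$ is $\omega$-periodic on $[t_0,+\infty)$ for some $t_0\geq0$. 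The one genuinely delicate ingredient is precisely the step ``$\lim_n u(\tau_n)\in K$ because $K$ is closed'' followed by the propagation $w(s)=S(s)w(0)$: if one only knew that the range were relatively compact, the argument would stop at the conclusions of Corollaries~\ref{cor3} and~\ref{cor6} (almost periodicity, resp.\ asymptotic almost periodicity), which is strictly weaker than periodicity.
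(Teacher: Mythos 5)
Your proof is correct, and much of it (the forward implications, the Arzel\`a--Ascoli bridge, and part iii)) coincides with the paper's argument. For the crucial converse in i) and ii) --- upgrading ``equicontinuity plus compact range'' to genuine \emph{compactness} of the translate set --- you take a genuinely different route. The paper observes that the map $\Phi:x\mapsto S(\cdot)x$ from the compact set $u(\R^+)$ (resp.\ $u(\R)$) into $BC(\R^+,X)$ is continuous precisely because of the equicontinuity hypothesis, and that $\Phi(u(\tau))=T^+_{\tau}u$ (resp.\ $R(T_{\tau}u)$); compactness of the translate set is then immediate as the continuous image of a compact set, and Corollary \ref{cor5} (resp.\ Corollary \ref{cor4}) concludes. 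You instead first obtain \emph{relative} compactness from Corollaries \ref{cor6}/\ref{cor3} together with Theorem \ref{cor2}/Theorem \ref{th2}, and then prove closedness of the orbit by hand: a limit $w$ of $T^+_{\tau_n}u$ has $w(0)\in u(\R^+)$ because the range is closed, and the trajectory relation propagates this to $w(s)=S(s)w(0)=u(a+s)$. Both arguments are sound; the paper's is shorter and avoids the case distinction on $(\tau_n)$, while yours makes explicit exactly where closedness (as opposed to mere relative compactness) of the range is used, which is the honest content of the compact-versus-relatively-compact distinction. The remaining ingredients --- the identity $\sup_{x\in K}d\left(S(\tau_1)x,S(\tau_2)x\right)=\sup_{t}d\left(u(t+\tau_1),u(t+\tau_2)\right)$, Lemma \ref{lem2}, and the verification in iii) that the periodic extension $v$ of Remark \ref{rq1} is a complete trajectory --- are exactly those of the paper.
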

%*******************************************

%*******************************************
\begin{remark}
%remark 5.5
Thus under the assumption of equicontinuity, a complete trajectory of a dynamical system with a compact range is necessarily periodic, although there are almost periodic functions with a compact range, which are not periodic. An example of  such  function is given by Haraux in \cite{Ha4}.
\end{remark}
%******************************************* 

%*******************************************
\begin{proof}[Proof of Corollary \ref{cor7}]
\vskip 2 mm
{\bf i)} Remark that if  $u$ is a positive trajectory  which  is periodic on $[t_0,+\infty)$ for some $t_0\geq0$, then first $u(\R^+)$ is compact and second
$u\in AAP(\R^+,X)$  (see Remark \ref{rq1}). As consequence of Corollary \ref{cor6}, the set $\left( S(t) \right)_{t\geq0}$ is  equicontinuous on $u(\R^+)$.
Reciprocally assume the positive trajectory $u$ is such that $\left( S(t) \right)_{t\geq0}$ is  equicontinuous on the compact set $u(\R^+)$.
It remains to prove that the positive trajectory 
$u$ is periodic on $[t_0,+\infty)$ for some $t_0\geq0$.
For each $x\in u(\R^+)$, the map $S(\cdot)x$ is continuous and satisfies $S(t)x\in u(\R^+)$ for each $t\geq0$. Then the map $S(\cdot)x$ is bounded and continuous , so the map 
$\Phi:u(\R^+)\to BC(\R^+,X)$ with $\Phi(x)=S(\cdot)x$ is well-defined.
The continuity of $\Phi$ results of the equicontinuity of $\left( S(t) \right)_{t\geq0}$ on $u(\R^+)$. 
 Then the set $\Phi(u(\R^+))=\set{\Phi(u(\tau))}{\tau\geq0}$ is a compact of $BC(\R^+,X)$. 
 Moreover 
  $\Phi(u(\tau))(t)=S(t)u(\tau)=u(t+\tau)$ for $t$ and $\tau\geq0$, then $\Phi(u(\tau))=T^+_{\tau}u$, so
 $\set{T^+_{\tau}u}{\tau\geq0}$ is a compact set  of $BC(\R^+,X)$. According to Corollary \ref{cor5}, the function $u$ is periodic on $[t_0,+\infty)$ for some $t_0\geq0$.
\vskip 2 mm
{\bf ii)} The proof of ii) is similar to that of i) by using Corollary \ref{cor3} instead of \ref{cor6}, Corollary \ref{cor4} instead of Corollary \ref{cor5} and by replacing
the map $\Phi:u(\R^+)\to BC(\R^+,X)$ with $\Phi(x)=S(\cdot)x$ by the map $\Phi:u(\R)\to BC(\R^+,X)$.
This permits to prove that the set $\{\Phi(u(\tau))=R(T_{\tau}u) ; \tau\in\R \}$ is a compact set of $(BC(\R^+,X),d_{\infty,+})$.
\vskip 2 mm
{\bf iii)}
If $v$ is a complete trajectory  which is periodic, then $v(\R)$ is compact and
according to ii), $\left( S(t) \right)_{t\geq0}$ is  equicontinuous on $v(\R)$. So the restriction $u$ of $v$ on $\R^+$  is a positive trajectory  such that $u(\R^+)=v(\R^+)=v(\R)$ since $v$ is periodic, then
$\left( S(t) \right)_{t\geq0}$ is  equicontinuous on the  compact set $u(\R^+)$.
Reciprocally, assume that $u$ is a positive trajectory 
such that $\left( S(t) \right)_{t\geq0}$ is  equicontinuous on the compact set $u(\R^+)$.
According to i), $u$ is $\omega$-periodic on $[t_0,+\infty)$ for some $t_0\geq0$. 
Let us denote by $v$ the function  defined in Remark \ref{rq1}.
For $t\geq s$, there exists  $n_0\in\N$ such that $s+n_0\omega\geq t_0$. 
The function $v$ is $\omega$-periodic and $u$ is a positive trajectory  satisfying $u(\tau)=v(\tau)$ for $\tau\geq t_0$, then
$v(t)=v(t+n_0\omega)=u(t+n_0\omega)=T(t-s)u(s+n_0\omega)=T(t-s)v(s+n_0\omega)=T(t-s)v(s)$ for $t\in\R$ and $n$ enough large.
Then $v$ is a periodic complete trajectory.
\end{proof}
%*******************************************   

%*******************************************
\begin{remark}
%remark 5.6
Under i) of Corollary \ref{cor7}, one can have $t_0>0$, that is the positive trajectory $u$ is not the restriction of a periodic complete trajectory.
For example, consider the bounded dynamical system  $\left( S(t) \right)_{t\geq0}$ on $L^1(0,1)$ defined by
\begin{equation*}
(S(t)x)(s) = \left\{
\begin{array}
[c]{lll}
x(s-t) &  \text{if} & t<s<1 \\
\text{ \ \ \ \ \ \ \ \ \ \ \ \ \ \ \ \ \ \ \ \ \ }\\
0  & \text{if} & 0<s<t  
\end{array}
\right. 
\end{equation*}
for $x\in L^1(0,1)$ and $0<t<1$. For  $t\geq1$, we set $S(t)=0$.
Then all positive trajectories have a compact range and the alone complete trajectory is the null function.
Thus all positive trajectories are not   the restriction of a periodic complete trajectory except the null function. 
\vskip 2 mm
Not all dynamical systems have this pathology, some systems are such that if two positive trajectories have the same value at the same time, then they are equal. If we consider such systems, we get more refined results from Corollary \ref{cor7}.
\end{remark}
%******************************************* 

A dynamical system  $\left( S(t) \right)_{t\geq0}$ has the 
\textit{backward uniqueness property}
 if any two positive trajectories having the same value at $t=t_0\geq0$ coincide for any other $t\geq0$. This property is equivalent to $S(t)\in C(X,X)$ is injective for each $t\geq 0$.
We say that \textit{a positive trajectory $u$ is extendable to a periodic complete trajectory}, if there exists a periodic complete trajectory  such that its restriction on $\R^+$ is $u$.

%*******************************************
\begin{corollary} 
%corollary 5.7
Let $\left( S(t) \right)_{t\geq0}$ be a dynamical system  on a complete metric space $(X,d)$.
Assume that $\left( S(t) \right)_{t\geq0}$ has the backward uniqueness property.
\vskip 2 mm
{\bf i)} If $u$ is a positive trajectory, then $u$ is periodic on $\R^+$ if and only if  $u(\R^+)$ is a compact set and $\left( S(t) \right)_{t\geq0}$ is  equicontinuous on $u(\R^+)$.
In this case the positive trajectory $u$ is extendable to a periodic complete trajectory $v$.
\vskip 2 mm
{\bf ii)}
If  $v$ is a complete trajectory, then $v$  is periodic
if and only if $v(\R^+)$ is a compact set and $\left( S(t) \right)_{t\geq0}$ is  equicontinuous on $v(\R^+)$.
\end{corollary}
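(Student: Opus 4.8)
The plan is to reduce this corollary to Corollary~\ref{cor7}, invoking the backward uniqueness hypothesis --- equivalently, injectivity of every $S(t)$, $t\ge0$ --- solely to remove the auxiliary time $t_0$ appearing there. I shall freely use that a positive trajectory $u$ satisfies $u(t+\tau)=S(\tau)u(t)$ for all $t,\tau\ge0$, and that a complete trajectory satisfies this identity for all $t\in\R$, $\tau\ge0$.

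For part~i), the forward implication is immediate: a function $\omega$-periodic on $\R^+$ is $\omega$-periodic on $[t_0,+\infty)$ with $t_0=0$, so Corollary~\ref{cor7} i) gives that $u(\R^+)$ is compact and $(S(t))_{t\ge0}$ is equicontinuous on it. For the converse I would apply Corollary~\ref{cor7} i) to get $t_0\ge0$ and $\omega>0$ with $u(t)=u(t+\omega)$ for all $t\ge t_0$; evaluating at $t=t_0$ and writing $u(t_0)=S(t_0)u(0)$, $u(t_0+\omega)=S(t_0)u(\omega)$, injectivity of $S(t_0)$ forces $u(0)=u(\omega)$, whence $u(t)=S(t)u(0)=S(t)u(\omega)=u(t+\omega)$ for every $t\ge0$. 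For the extendability claim I would set $v(t):=u(t+n\omega)$ for any $n\in\N$ with $t+n\omega\ge0$; this is well defined by the periodicity just proved (cf. Remark~\ref{rq1}), and $v$ is continuous, $\omega$-periodic, agrees with $u$ on $\R^+$, and is a complete trajectory since $v(t+\tau)=u(t+\tau+n\omega)=S(\tau)u(t+n\omega)=S(\tau)v(t)$.

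For part~ii), if $v$ is periodic then $v(\R)=v(\R^+)$ is compact and Corollary~\ref{cor7} ii) gives equicontinuity of $(S(t))_{t\ge0}$ on $v(\R)=v(\R^+)$. Conversely, assume $v(\R^+)$ is compact and $(S(t))_{t\ge0}$ is equicontinuous on it; then the restriction $u:=v|_{\R^+}$ is a positive trajectory with $u(\R^+)=v(\R^+)$, so part~i) yields $\omega>0$ with $v(0)=u(0)=u(\omega)=v(\omega)$, and in particular $v$ is $\omega$-periodic on $\R^+$. To extend the periodicity to $t<0$ I would use the complete-trajectory identity with $\tau=-t>0$: $S(-t)v(t)=v(0)=v(\omega)=S(-t)v(t+\omega)$, so injectivity of $S(-t)$ gives $v(t)=v(t+\omega)$.

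I do not anticipate a genuine obstacle: the analytic substance --- converting between relative or absolute compactness in $BC$ and equicontinuity of $(S(t))_{t\ge0}$, and between trajectories and their translates --- is already packaged in Corollaries~\ref{cor3}--\ref{cor7}. The only points needing care are the correct use of the complete-trajectory relation at negative times and the verification that the $\omega$-periodic extension in part~i) is itself a trajectory; both come down to the semigroup law together with injectivity of the maps $S(t)$.
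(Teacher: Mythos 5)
Your proposal is correct and follows essentially the same route as the paper: both reduce to Corollary \ref{cor7} and then use backward uniqueness (injectivity of each $S(t)$) to push the periodicity from $[t_0,+\infty)$ back to all of $\R^+$, and finally to negative times. The only cosmetic difference is that in part ii) you extend periodicity to $t<0$ by applying injectivity of $S(-t)$ pointwise to the identities $S(-t)v(t)=v(0)=v(\omega)=S(-t)v(t+\omega)$, whereas the paper compares the two shifted positive trajectories $v(\cdot-T)$ and $w(\cdot-T)$; both are valid and rest on the same hypothesis.
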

%*******************************************

%*******************************************   
\begin{proof}
{\bf i)} 
The direct implication results of i) of Corollary \ref{cor7}.
For the reciprocal implication we use i) of Corollary \ref{cor7}. Then the positive trajectory $u$ is $\omega$-periodic on $[t_0,+\infty)$ for some $t_0\geq0$.
Let us denote by $v\in C(\R,X)$ the  $\omega$-periodic function satisfying $u(t)=v(t)$ for $t\geq t_0$ (see Remark \ref{rq1}).
The restriction of $v$ on $\R^+$ and $u$ are two  positive trajectories having the same value at $t=t_0$ ($t_0\geq0$).  From  the backward uniqueness property, we have $u(t)=v(t)$ for $t\geq 0$, then $u$ is periodic on $\R^+$. By  build, $v$ is periodic and as in the proof of iii) of Corollary \ref{cor7}, we deduce that $v$ is a complete trajectory.
\vskip 2 mm
{\bf ii)}
The direct implication results of ii)  Corollary \ref{cor7}, since $v(\R^+)=v(\R)$. For the reciprocal implication, we consider $v$  a complete trajectory such that $v(\R^+)$ is a compact set and $\left( S(t) \right)_{t\geq0}$ is  equicontinuous on $v(\R^+)$.
Then the restriction $u$ of the complete trajectory $v$ on $\R^+$ is a positive trajectory  such that 
$u(\R^+)$ is  compact and $\left( S(t) \right)_{t\geq0}$ is  equicontinuous on $u(\R^+)$.
According to i), $u$ is $\omega$-periodic on $\R^+$.
 Let us denote by $w\in C(\R,X)$ the  
$\omega$-periodic
 function  satisfying $u(t)=w(t)$ for $t\geq0$. As in proof of iii)  Corollary \ref{cor7}, we deduce that $w$ is a complete trajectory. Fix $T>0$. The two maps $\tilde{v}$ and $\tilde{w}:\R^+\to X$
defined by  $\tilde{v}=v(\cdot,-T)$ and $\tilde{w}=w(\cdot,-T)$ are two  positive trajectories having the same value at $t=T$.
From  the backward uniqueness property,  we have  $\tilde{v}=\tilde{w}$, that is $v(t)=w(t)$ for $t\geq -T$. Since $T$ is arbitrary, then $v(t)=w(t)$ for each $t\in\R$ where $w$ is a periodic complete trajectory.
This proves that $v$ is a periodic complete trajectory.
\end{proof}
%*******************************************   

%%%%%%%%%%%%%%%%%%%%%%%%%


\begin{thebibliography}{9}                                                                                                

\bibitem{Am-Pr} L. Amerio, G. Prouse, \textit{Almost periodic functions and functional equations}, Van Nostrand Reinhold Comp., New York, 1971.

\bibitem {Bo1}S. Bochner, {\it A new approach to almost periodicity},
Proc.  Natl. Acad. Sci.  USA \textbf{48} (1962) 2039-2043.

\bibitem {Bo2}S. Bochner, {\it Continuous mappings of almost automorphic and almost
periodic functions}, Proc.  Natl. Acad. Sci.  USA \textbf{52} (1964) 907-910.

\bibitem{Da} C. M. Dafermos, \textit{Almost periodic processes and almost periodic solutions of evolution equations. Dynamical systems} (Proc. Internat. Sympos., Univ. Florida, Gainesville, Fla., 1976), pp. 43-57. Academic Press, New York, 1977. 

\bibitem{Fi} A. M. Fink,  \textit{Almost periodic differential equations}, Lecture Notes
in Math., vol. 377, Springer-Verlag, Berlin-New York, 1974.

\bibitem{Fr} M. Fr\'echet, \textit{Les fonctions asymptotiquement presque-p\'eriodiques continues}, C.R. Math. Acad. Sci. Paris \textbf{213} (1941), pp. 520-522 (in French).

\bibitem{Ha1} A. Haraux, \textit{Asymptotic behavior of trajectories for some nonautonomous, almost periodic processes}, J. Differential Equations {\bf 49} (1983), 473-483.

\bibitem{Ha4} A. Haraux, \textit{Sur les trajectoires compactes de syst\`emes dynamiques autonomes.  [On the compact trajectories of autonomous dynamical systems]}, Portugal. Math. {\bf 44} (1987), 253-259 (in French).

\bibitem{Ha2} A. Haraux, \textit{A simple almost-periodicity criterion and applications}, J. Differential Equations {\bf 66} (1987), 51-61. 

\bibitem{Ha3} A. Haraux, \textit{Syst\`emes dynamiques dissipatifs et applications.  [Dissipative dynamical systems and applications]}, Recherches en Math\'ematiques Appliqu\'ees [Research in Applied Mathematics], 17, Masson, Paris, 1991 (in French).

\bibitem{La} S. Lang \textit{Real and functional analysis}, Third edition, Springer-Verlag, New York, 1993.

\bibitem{LZ} B. M. Levitan, V. V. Zhikov, \textit{Almost periodic functions and differential equations},Translated from the Russian by L. W. Longdon. Cambridge University Press, Cambridge-New York, 1982. 

\bibitem{NS} V. Nemytskii and V. Stepanov, \textit{Qualitative theory of differential equations}, Princeton University Press, Princeton, New Jersey, 1960.

\bibitem{RS1} W. M. Ruess, W. H. Summers, \textit{Asymptotic almost periodicity and motions of semigroups of operators}, Proceedings of the symposium on operator theory (Athens, 1985). Linear Algebra Appl. {\bf 84} (1986), 335-351.

\bibitem{RS2} W. M. Ruess, W. H. Summers, \textit{Minimal sets of almost periodic motions} Math. Ann. {\bf 276} (1986), 145-158.

\bibitem{RS3} W. M. Ruess, W. H. Summers, \textit{Compactness in spaces of vector valued continuous functions and asymptotic almost periodicity}, Math. Nachr. {\bf 135} (1988), 7-33.

\bibitem {Yo} T. Yoshizawa, \textit{Stability theory and the existence of periodic solutions and almost periodic solutions}, Springer, New-york, 1975.

\bibitem {Za} S. Zaidman,  \textit{Almost-periodic functions in abstract spaces}, Research Notes in Mathematics, \textbf{126}, Boston, 1985. 
 
 \bibitem {Za1} S. Zaidman,  \textit{On relatively compact trajectories of semigroups of class $C_0$}, Applicable Anal. {\bf 21} (1986), 9-12.
 


\end{thebibliography}
\end{document}